\tikzstyle{var} = [rectangle, minimum width=1cm, minimum height=0.5cm, text centered, draw=black, fill=white!30]
\tikzstyle{output} = [rectangle, text centered, draw=black, fill=gray!20]
\tikzstyle{input} = [rectangle, text centered, draw=black, fill=green!20]
\tikzstyle{arrow} = [thick,->,>=stealth]
\tikzset{
  closed/.style = {decoration = {markings, mark = at position 0.5 with { \node[transform shape, xscale = .8, yscale=.4] {/}; } }, postaction = {decorate} },
  open/.style = {decoration = {markings, mark = at position 0.5 with { \node[transform shape, scale = .7] {$\circ$}; } }, postaction = {decorate} }
}
\newcommand{\Gr}{\text{Gr}}
\newcommand{\GW}{\text{GW}}
\newcommand{\Hom}{\text{Hom}}
\newcommand{\im}{\text{im}}
\newcommand{\Proj}{\text{Proj}}
\newcommand{\Sch}{\text{Sch}}
\newcommand{\sgn}{\text{sgn}}
\newcommand{\spn}{\text{span}}
\newcommand{\Spec}{\text{Spec}\hspace{0.2em}}
\newcommand{\Wr}{\text{Wr}}
\newcommand{\xto}[1]{\xrightarrow{#1}}
\newcommand{\hookto}{\xhookrightarrow{}}
\newcommand{\tto}{\twoheadrightarrow}
\newcommand{\superimpose}[2]{%
  {\ooalign{$#1\@firstoftwo#2$\cr\hfil$#1\@secondoftwo#2$\hfil\cr}}}
\newcommand{\smallslash}{\mbox{\tiny/}}
\newcommand{\clhook}{\mathrel{\raisebox{0.1em}{$\mathrel{\mathpalette\superimpose{{\hspace{0.1cm}\vspace{0.1em}\smallslash}{\hookrightarrow}}}$}}}
\newcommand{\gw}[1]{\left\langle #1 \right\rangle}
\renewcommand{\part}[2]{\frac{\partial #1}{\partial #2}}
\newcommand{\A}{\mathbb{A}}
\newcommand{\C}{\mathbb{C}}
\renewcommand{\P}{\mathbb{P}}
\newcommand{\V}{\mathbb{V}}
\renewcommand{\O}{\mathcal{O}}
\let\emptyset\varnothing
\let\sec\S
\let\phi\varphi
\definecolor{darkgreen}{rgb}{0,0.30,0} 
\definecolor{darkred}{rgb}{0.75,0,0}
\definecolor{darkblue}{rgb}{0,0,0.6} 
\definecolor{lightblue}{RGB}{179, 230, 255}
\def\makeautorefname#1#2{\expandafter\def\csname#1autorefname\endcsname{#2}}
\theoremstyle{definition}
\newtheorem{theorem}{Theorem}[section]
\newtheorem{corollary}[theorem]{Corollary}
\newtheorem{definition}[theorem]{Definition}
\newtheorem{example}[theorem]{Example}
\newtheorem{lemma}[theorem]{Lemma}
\newtheorem{notation}[theorem]{Notation}
\newtheorem{proposition}[theorem]{Proposition}
\newtheorem{remark}[theorem]{Remark}
\newtheorem{realitycheck}[theorem]{Reality check}
\newtheorem{bigtheorem}{Theorem}
\let\c@corollary=\c@theorem
\let\c@proposition=\c@theorem
\let\c@lemma=\c@theorem
\let\c@conjecture=\c@theorem
\let\c@definition=\c@theorem
\let\c@example=\c@theorem
\let\c@remark=\c@theorem
\let\c@notation=\c@theorem
\let\c@equation\c@theorem
\let\sec\S
\newcommand{\til}[1]{\widetilde{#1}}
\providecommand{\ind}{\text{ind}}
\newcommand{\Pl}{\text{Pl}}
\renewcommand{\O}{\mathcal{O}}
\newcommand{\ev}{\text{ev}}
\let\del\partial
\providecommand{\HOM}{\mathcal{H}\hspace{-0.1em}\textit{om}}
\providecommand{\tr}{\text{tr}}
\title{An Enriched Degree of the Wronski}
\author{Thomas Brazelton}
\date{Last Compiled: \today}
\begin{document}

\begin{abstract} Given $mp$ different $p$-planes in general position in $(m+p)$-dimensional space, a classical problem is to ask how many $p$-planes intersect all of them. For example when $m=p=2$, this is precisely the question of ``lines meeting four lines in 3-space'' after projectivizing. The Brouwer degree of the Wronski map provides an answer to this general question, first computed by Schubert over the complex numbers and Eremenko and Gabrielov over the reals. We provide an enriched degree of the Wronski for all $m$ and $p$ even, valued in the Grothendieck--Witt ring of a field, using machinery from $\A^1$-homotopy theory. We further demonstrate in all parities that the local contribution of an $m$-plane is a determinantal relationship between certain Pl\"{u}cker coordinates of the $p$-planes it intersects.

\end{abstract}

\maketitle

\section{Introduction}
Given $m$ functions $f_1(t), \ldots, f_m(t)$ of maximum degree equal to $m+p-1$, we define the \textit{Wronski}
\begin{align*}
    \Wr(f_1, \ldots, f_m)(t) := \begin{vmatrix} f_1(t) & f_2(t) & \cdots & f_m(t) \\
                           f_1'(t) & f_2'(t) & \cdots & f_m'(t) \\
                          \vdots & \vdots & \ddots & \vdots \\
                         f_1^{(m-1)}(t) & f_2^{(m-1)}(t) & \cdots & f_m^{(m-1)}(t) \end{vmatrix}.
\end{align*}
This is a polynomial of degree at most $mp$. Let $k_{m+p-1}[t]$ denote the vector space of polynomials of degree at most $m+p-1$ over a field $k$. We observe that if $s$ is a root of the Wronski polynomial, then the $m$-plane $\spn \left\{ f_1, \ldots, f_m \right\} \subseteq k_{m+p-t}[t]$ intersects the $p$-plane $E_p(s) = \spn\left\{ (t-s)^{m+p-1}, \ldots, (t-s)^m \right\}$ nontrivially. Thus the fiber of the Wronski counts certain $m$-planes intersecting $mp$ different $p$-planes.

We could also envision these polynomials $f_i$ as defining a rational curve by $\P^1 \to \P^{m-1}$, given by $t \mapsto \left[ f_1(t): \ldots : f_m(t) \right]$. In this case $s$ is a root of the Wronski if and only if the vectors $\phi(s), \phi'(s),\ldots, \phi^{(m-1)}(s)$ do not span all of $\P^{m-1}$. We say that $\phi$ \textit{inflects} at such a point. Thus the fiber of the Wronski counts rational curves of degree $(m+p-1)$ with $mp$ prescribed inflection points. Viewing the polynomials $f_i$ as spanning an $m$-plane in the $(m+p)$-dimensional vector space of polynomials over $k$ of degree at most $(m+p-1)$, we can consider the Wronski as a map of $mp$-dimensional varieties
\begin{equation}\label{eqn:Wr}
\begin{aligned}
    \Wr: \Gr_k(m,m+p) \to \P_k^{mp} = \Proj (k_{mp}[t]).
\end{aligned}
\end{equation}
In 1886, Schubert \cite{Schubert} formulated the number of $m$-planes meeting $mp$ general $p$-planes in $(m+p)$-dimensional space as
\begin{equation}\label{eqn:nC}
\begin{aligned}
    n_\mathbb{C} = \frac{1! 2! \cdots (p-1)! (mp)!}{m!(m+1)! \cdots (m+p-1)!}.
\end{aligned}
\end{equation}
This admits a combinatorial description in that it counts the number of \textit{standard Young tableaux} of size $m \times p$. It is also the Brouwer degree of the complex Wronski (\autoref{eqn:Wr} when $k= \mathbb{C}$). More than a century later, Eremenko and Gabrielov computed the Brouwer degree of the real Wronski (\autoref{eqn:Wr} when $k= \mathbb{R}$) \cite{EG-deg-2,EG}, which also admits a combinatorial description, being the number of \textit{semi-shifted} standard Young tableaux of size $m \times p$ \cite{HoffmanHumphreys,White}.
\begin{align*}
    n_\mathbb{R} &=  \begin{cases} \frac{1! 2! \cdots (p-1)! (m-1)! (m-2)! \cdots (m-p+1)! (mp/2)!}{(m-p+2)! (m-p+4)! \cdots (m+p-2)! \left( \frac{m-p+1}{2} \right)! \left( \frac{m-p+3}{2} \right)! \cdots \left(\frac{m+p-1}{2}\right)!} & m+p \text{ odd} \\ 0 & m+p \text{ even}. \end{cases}
\end{align*}
We attach the first few values of these for the reader's reference:
\begin{figure}[H]
\centering
\begin{minipage}{.5\textwidth}
  \centering
  \begin{tabular}{|l | l l l l |}
    \hline
    \diagbox{$p$}{$m$} & $2$ & $3$ & $4$ & $5$ \\
    \hline
    $2$ & $2$ & $5$ & $14$ & $42$ \\
    $3$ & $5$ & $42$ & $462$ & $6006$ \\
    $4$ & $14$ & $462$ & $24024$ & $1662804$\\
    $5$ & $42$ & $6006$ & $1662804$ & $701149020$ \\
    \hline
    \end{tabular}
    \caption{First few values of $n_\mathbb{C}$}\end{minipage}%
\begin{minipage}{.5\textwidth}
  \centering
  \begin{tabular}{|l | l l l l |}
    \hline
    \diagbox{$p$}{$m$} & $2$ & $3$ & $4$ & $5$ \\
    \hline
    $2$ & $0$ & $1$ & $0$ & $2$ \\
    $3$ & $1$ & $0$ & $2$ & $0$ \\
    $4$ & $0$ & $2$ & $0$ & $12$\\
    $5$ & $2$ & $0$ & $12$ & $0$ \\
    \hline
    \end{tabular}
    \caption{First few values of $n_\mathbb{R}$}
\end{minipage}
\end{figure}
In this paper we unify these two computations into a single enriched Brouwer degree in the case when $m$ and $p$ are both even. The algebrao-geometric analogue of the Brouwer degree that we use is called the $\A^1$\textit{-Brouwer degree}, first defined by Morel \cite{Morel-ICM}, which is valued in the Grothendieck--Witt group of symmetric bilinear forms over $k$. This tool has been instrumental in the development of $\A^1$\textit{-enumerative geometry} (or \textit{enriched enumerative geometry}). This program has grown in recent years due to seminal work of Levine \cite{Levine}, Kass and Wickelgren \cite{KW-EKL}, Bachmann and Wickelgren \cite{BW3}, among others.

\begin{bigtheorem}\label{thm:main-thm-deg-wr} (As~\autoref{thm:deg-wronski-both-even}) Let $k$ be any field in which $(m+p-1)!$ is invertible, and let $m$ and $p$ both be even. Then the $\A^1$-degree of the Wronski $\Wr:\Gr_k(m,m+p) \to \P_k^{mp}$ is
\begin{align*}
    \deg^{\A^1} \Wr = \frac{n_\C}{2} \mathbb{H},
\end{align*}
where $n_\C$ is the Brouwer degree of the Wronski over the complex numbers, and $\mathbb{H}$ denotes the hyperbolic form $\left\langle 1,-1 \right\rangle$.
\end{bigtheorem}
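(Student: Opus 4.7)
The plan is to express $\deg^{\A^1}\Wr$ as a sum of local $\A^1$-degrees at preimages in a generic fiber, and then to pair these local contributions into hyperbolic forms using a symmetry of the Wronski special to the even--even case.

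First, I would verify that $\Wr$ is generically \'etale onto $\P^{mp}_k$ when $(m+p-1)!$ is invertible in $k$, so that its $\A^1$-Brouwer degree in $\GW(k)$ is computed by the Kass--Wickelgren local formula
$$\deg^{\A^1}\Wr \;=\; \sum_{x \in \Wr^{-1}(g)} \Tr_{k(x)/k}\gw{\Jac_x},$$
for any $k$-rational $g \in \P_k^{mp}(k)$ over which $\Wr$ is \'etale, where $\Jac_x$ denotes the Jacobian determinant of $\Wr$ in local coordinates at $x$.

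The main step is to exhibit, when $m$ and $p$ are both even, an involution $\sigma$ on the fiber $\Wr^{-1}(g)$, defined over $k$, that is fixed-point-free and for which the paired contributions $\gw{\Jac_x}$ and $\gw{\Jac_{\sigma(x)}}$ sum to $\mathbb{H}$. A natural source for such a $\sigma$ is Grassmannian duality $\Gr_k(m,m+p) \iso \Gr_k(p,m+p)$ combined with an involution of $\P^1$ that permutes the $mp$ prescribed inflection conditions; under the even--even parity, one expects this construction to reverse the sign of the local Jacobian, so that each pair yields $\gw{a} + \gw{-a} = \mathbb{H}$. Combined with the paper's secondary result expressing each local contribution as a determinant of certain Pl\"{u}cker coordinates, this lets the pair contributions be identified cleanly.

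The main obstacle is constructing this involution explicitly and verifying it is fixed-point-free and sign-reversing on a judiciously chosen fiber. Granting this, Schubert's count yields $n_\C/2$ pairs, each contributing $\mathbb{H}$, giving the theorem. As sanity checks: the rank of $\frac{n_\C}{2}\mathbb{H}$ is $n_\C$, matching Schubert; and its signature over $\R$ is $0$, matching Eremenko--Gabrielov's vanishing of $n_\R$ in the even parity case.
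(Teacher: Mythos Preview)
Your proposal has a genuine gap at its central step: the fixed-point-free, sign-reversing involution on the fiber is never constructed, and the source you suggest does not produce one. Grassmann duality is an isomorphism $\Gr_k(m,m+p)\xrightarrow{\sim}\Gr_k(p,m+p)$ between \emph{different} Grassmannians whenever $m\neq p$, so it cannot by itself give an involution on a fiber of $\Wr:\Gr_k(m,m+p)\to\P^{mp}_k$. Even in the case $m=p$, there is no evident reason why the dual of an $m$-plane meeting $E_p(s_1),\ldots,E_p(s_{mp})$ should again meet these same planes, nor why the Jacobian of $\Wr$ should change sign under such a map; and adjoining an involution of $\P^1$ only permutes the $s_i$, which does not obviously alter the sign of the local determinant in \autoref{thm:formula-local-index}. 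You acknowledge this obstacle yourself, but without the involution the argument has no content beyond the sanity checks.

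The paper's proof proceeds along an entirely different line that avoids any fiberwise pairing. One builds the rank-$mp$ bundle $\mathcal{V}=\bigoplus_{i=1}^{mp}\wedge^m\mathcal{S}^\ast$ on $\Gr_k(m,m+p)$ and a section $\sigma$ whose zero locus is exactly $\Wr^{-1}(\Phi)$, and shows (\autoref{lem:local-index-is-a1-local-degree-wronski}) that $\ind_W\sigma$ and $\deg_W^{\A^1}\Wr$ differ by a fixed rank-one class in $\GW(k)$. When $m$ and $p$ are both even, $\mathcal{V}$ is relatively orientable (\autoref{prop:V-orientable}); since it splits as a sum of line bundles (each of odd rank), a general result of Levine forces $e(\mathcal{V})$ to be an integer multiple of $\mathbb{H}$ (\autoref{lem:Levine-euler-class}). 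Multiplying a hyperbolic class by any rank-one class leaves it hyperbolic, so $\deg^{\A^1}\Wr$ is hyperbolic as well, and Schubert's rank computation pins down the multiple as $n_\C/2$. The hyperbolicity thus comes from the structure of $\mathcal{V}$ as a sum of odd-rank pieces, not from any symmetry of the fiber.
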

Given a closed point $W = \spn \left\{ f_1, \ldots, f_m \right\}$ with Wronski polynomial having roots at distinct scalars $s_1, \ldots, s_{mp} \in k$, we may compute the local degree of the Wronski in any parities.
\begin{bigtheorem} (As~\autoref{thm:formula-local-index}) Let $k$ be any field in which $(m+p-1)!$ is invertible, and let $W \in \Gr_k(m,m+p)$ be a closed point whose Wronski polynomial is of the form $\Wr(W)(t) = \prod_{i=1}^{mp}(t-s_i)$ for distinct $s_i \in k$. Then we have that
\begin{align*}
    \deg_W^{\A^1}(\Wr) = \left\langle C \cdot \det \mathcal{B} \right\rangle,
\end{align*}
where $C$ is a fixed constant depending only on $m$, $p$, and the $s_i$'s, and $\mathcal{B}$ is a matrix of distinguished Pl\"{u}cker coordinates of the $p$-planes $E_p(s_1), \ldots, E_p(s_{mp})$.
\end{bigtheorem}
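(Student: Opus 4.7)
The strategy is a two-step reduction: first identify the local $\A^1$-degree with a single Jacobian determinant class, then compute that Jacobian by exploiting the linearity of the Wronski in Plücker coordinates.

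The first step is to check that $W$ is an étale point of $\Wr$. The hypothesis that $(m+p-1)!$ is invertible and that the Wronski polynomial has $mp$ distinct roots $s_i \in k$ forces the scheme-theoretic fiber $\Wr^{-1}(\prod_i(t-s_i))$ to be reduced at $W$: this is a Schubert transversality statement whose hypothesis is exactly that the relevant factorial be a unit. Once we know $d\Wr$ is an isomorphism at $W$, the Kass--Wickelgren form of the Eisenbud--Khimshiashvili--Levine theorem identifies
\begin{align*}
    \deg_W^{\A^1}(\Wr) = \lrangle{\det \Jac(\Wr)(W)} \in \GW(k),
\end{align*}
computed in any smooth affine charts about $W$ and $\Wr(W)$. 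So the problem reduces to computing $\det \Jac(\Wr)(W)$ up to a square.

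The second step exploits the classical observation that $\Wr$ is \emph{linear} in Plücker coordinates: representing $W$ by its $m\times(m+p)$ coefficient matrix, we have $\Wr(W)(t) = \sum_I p_I(W)\omega_I(t)$, where the $p_I$ are the maximal minors of this matrix and the $\omega_I \in k_{mp}[t]$ are fixed universal polynomials whose coefficients are products of factorials. Thus $\Wr$ factors through the Plücker embedding, and in Plücker-compatible affine charts the Jacobian of $\Wr$ at $W$ splits as the composition of a (constant, universal) linear map with factorial coefficients, followed by the Jacobian of the Plücker embedding at $W$. The universal linear piece contributes a scalar depending only on $m$ and $p$, which will be absorbed into $C$.

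The heart of the argument is to identify the remaining Jacobian as $\det \mathcal{B}$ up to a further scalar in $C$. The key geometric input is duality: $s_i$ being a root of $\Wr(W)$ is equivalent to $W \cap E_p(s_i) \neq 0$, and under the Plücker perfect pairing $\bigwedge^m k^{m+p} \otimes \bigwedge^p k^{m+p} \to \bigwedge^{m+p} k^{m+p}$ this incidence is a linear condition on the Plücker coordinates of $W$ whose coefficients are precisely the Plücker coordinates of $E_p(s_i)$. Writing $\Wr$ near $W$ as the tuple of these $mp$ linear conditions and changing coordinates on $\P_k^{mp}$ from the coefficient basis to a root-adapted basis (which introduces a Vandermonde factor $\prod_{i<j}(s_j-s_i)$, absorbed into $C$), the Jacobian becomes, row by row, the matrix $\mathcal{B}$ of distinguished Plücker coordinates of the $E_p(s_i)$ relative to a basis dual to the chosen affine chart on $\Gr_k(m,m+p)$. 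Taking determinants yields $\det \Jac(\Wr)(W) = C\cdot \det \mathcal{B}$.

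The main obstacle is bookkeeping in this last step: one must pin down which Plücker coordinates of $E_p(s_i)$ appear in each row of $\mathcal{B}$ (determined by the chosen chart on $\Gr_k(m,m+p)$) and verify that every sign, factorial, and Vandermonde factor that enters is independent of $W$, so that the constant $C$ genuinely depends only on $m$, $p$, and $s_1,\ldots,s_{mp}$.
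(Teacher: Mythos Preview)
Your overall shape matches the paper's argument closely: reduce to a Jacobian at a simple point, pass from the coefficient basis on $\P^{mp}$ to the ``evaluation at $s_1,\ldots,s_{mp}$'' basis (this is exactly the paper's auxiliary section $\sigma$ of $\mathcal{V}=\bigoplus_{i=1}^{mp}\wedge^m\mathcal{S}^\ast$ together with the commutative diagram relating $\sigma$ to $\Wr$ through $V_{m,p}$, $\ev_s$, and a translation), and then recognize the Jacobian entries as Pl\"{u}cker coordinates of $F_m(s_\ell)$, hence of $E_p(s_\ell)$ by duality. Your ``linearity in Pl\"{u}cker coordinates'' and ``Vandermonde from the change of basis'' are precisely the content of the paper's Lemmas relating $\ind_W\sigma$ to $\deg_W^{\A^1}\Wr$ by the factor $\langle V(s)\cdot(\prod_{i=1}^{m-1}i!)^{mp}\rangle$. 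So once you are at a simple zero, your plan and the paper's proof are essentially the same computation, phrased slightly differently.

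There is, however, a genuine gap in your first step. You assert that distinctness of the $s_i$ together with invertibility of $(m+p-1)!$ forces the fiber to be reduced at $W$, calling this ``a Schubert transversality statement.'' That is not true as stated: for particular (non-generic) choices of distinct $s_1,\ldots,s_{mp}$ the osculating Schubert varieties can fail to meet transversally, and the fiber can acquire a non-reduced point. The factorial hypothesis guarantees that the osculating flags are well-defined and that the Wronski has the right degree, not that every fiber with distinct roots is \'etale. The paper handles this by \emph{assuming} $W$ is a simple preimage in the detailed statement (\autoref{thm:formula-local-index}); equivalently, the right-hand side $\langle C\cdot\det\mathcal{B}\rangle$ is only a rank-one class when $\det\mathcal{B}\ne 0$, and that nonvanishing is precisely the simplicity of $W$. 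You should either add simplicity as a hypothesis or note that the conclusion is vacuous when $\det\mathcal{B}=0$; do not try to deduce \'etaleness from the stated hypotheses.
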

The $\ell$th column of $\mathcal{B}$ consists of $mp$ distinguished Pl\"{u}cker coordinates of the plane $E_p(s_\ell)$, and each row corresponds to the same coordinate. Thus considering the columns as vectors over $k$, we have that $\det \mathcal{B}$ is a signed volume of vectors determined by the $p$-planes that $\spn \left\{ f_1, \ldots, f_m \right\}$ intersects.

As the Wronski also counts rational curves with prescribed inflection data, we provide evidence that the local $\A^1$-degree encodes information about the geometry of the associated rational curve. In \autoref{cor:quartics} we demonstrate that the local degree at a planar quartic aligns with an enriched \textit{Welschinger invariant} in the sense of \cite{KLSW}.

\subsection{Outline}

In \autoref{sec:preliminaries}, we provide some historical background for studying the Brouwer degree of the Wronski, before exploring in greater detail the technical machinery. We discuss the rational normal curve, Grassmann duality, and Pl\"{u}cker coordinates, before providing relevant background from $\A^1$-enumerative geometry. We discuss relative orientations of vector bundles and how the formalism of $\A^1$-enumerative geometry allows one to associate to them a well-defined Euler number valued in Grothendieck--Witt of a ground field.

In \autoref{sec:degree}, we compare the Wronski to the section of an appropriate vector bundle over an affine chart on the Grassmannian, and demonstrate that their Brouwer degrees agree up to some global constant. In the case where $m$ and $p$ are both even, we can compute the global $\A^1$-degree of the Wronski using the fact that the Euler classes of relatively oriented vector bundles with odd rank summands are hyperbolic.

Finally, in \autoref{sec:local-index}, we provide an arithmetic formula for the local $\A^1$-degree of the Wronski that holds in all parities. We demonstrate that this local index at an $m$-plane can be interpreted as a ``signed volume'' of the $p$-planes that this $m$-plane intersects. This agrees with and generalizes the local index computed by \cite{SW}. We provide some very preliminary evidence towards a connection between the local $\mathbb{A}^1$-degree of the Wronski and arithmetic Welschinger invariants a la \cite{KLSW}.

\subsection{Acknowledgements}
Thank you to Kirsten Wickelgren for suggesting and supervising this problem, and for Mona Merling for being a constant source of mathematical support. We are immensely grateful to Frank Sottile for inspiring conversations about this work and related topics. Finally, we have benefited from discussions about this work with many people, including Connor Cassady, Andrew Kobin, Stephen McKean, and Sabrina Pauli, to name a few. We acknowledge support from an NSF Graduate Research Fellowship (DGE-1845298).

\section{Preliminaries}\label{sec:preliminaries}

We will begin by delving into the Wronski, understanding its geometric interpretation as counting planes meeting planes of the correct codimension osculating the rational normal curve. By mapping a plane of covectors to the plane it annihilates, we have a natural duality on Grassmannians, and it will benefit us to be able to translate information through this duality, and discuss how it relates to things like Pl\"{u}cker coordinates. After this, we establish some of the foundations of $\A^1$-enumerative geometry, from which we collect the tools to explore the local degree of the Wronski in greater detail.

\subsection{The rational normal curve}\label{subsec:rational-curve}
Over the complex numbers, the degree of the Wronski provides a count of planes which meet a collection of planes, which are said to \textit{osculate the rational normal curve}. We will define these terms, and provide a rough outline of this argument over any field here, but for a more rigorous version of this statement over the complex numbers, we refer the reader to \cite[\sec10.1]{Sottile}.

We may view affine space $\A^{m+p}_k$ as the space $k_{m+p-1}[t]$ of polynomials of degree at most $m+p-1$ with coefficients in $k$ by considering a rational point $(a_0, \ldots, a_{m+p-1}) \in \A^n_k$ as a polynomial $a_0 + a_1 t + \ldots + a_{m+p-1} t^{m+p-1} \in k[t]$. We then let $\gamma: \A^1_k \to \A_k^{m+p}$ denote the \textit{rational normal curve}, also referred to the \textit{moment curve} $\gamma$, defined to be the image of the map
\begin{align*}
    s \mapsto (1, s, s^2, s^3, \ldots, s^{m+p-1}),
\end{align*}
where as above we are identifying affine space with a space of polynomials. That is
\begin{align*}
    \gamma(s) = 1 + st + s^2 t^2 + \ldots + s^{m+p-1}t^{m+p-1} \in k_{m+p-1}[t].
\end{align*}

We may define the derivative of the rational normal curve by deriving termwise, to obtain
\begin{align*}
    \gamma'(s) = (0, 1, 2s, 3s^2, \ldots, (m+p-1)s^{m+p-2}),
\end{align*}
which corresponds to the polynomial
\begin{align*}
    \gamma'(s) = t + 2st^2 + 3s^2 t^3 + \ldots + (m+p-1)s^{m+p-2} t^{m+p-1} \in k_{m+p-1}[t].
\end{align*}
Higher derivatives are defined analogously. One may check that, for any $s$, the elements $\gamma(s)$, $\gamma'(s)$, $\ldots$, $\gamma^{(m+p-1)}(s)$ yield a basis of $k_{m+p-1}[t]$. Thus we obtain an \textit{osculating flag} $F_\bullet(s)$ along the rational curve whose $i$-plane at any time $s$ is the span:
\begin{equation}\label{eqn:osculating-flag-F}
\begin{aligned}
    F_i(s) := \spn\left\{ \gamma(s), \gamma'(s), \ldots, \gamma^{(i-1)}(s) \right\}.
\end{aligned}
\end{equation}
In this setting, we say that the $i$-plane $F_i(s)$ \textit{osculates the rational normal curve} at the point $\gamma(s)$. We will see in \autoref{rmk:flags-in-duality} that $F_m(s)$ is dual in a sense to the planes $E_p(s)$ defined in the introduction.

The monomial basis for polynomials provides an isomorphism between $k_{m+p-1}[t]$ and its dual $k_{m+p-1}[t]^\ast$, given by sending a polynomial $g$ to $g^\ast$, where $g^\ast(f)$ is defined to be the dot product of the coefficients of $g$ and $f$. Under this isomorphism we may view each $\gamma^{(i)}(s)$ as a covector, from which perspective it admits an interesting interpretation.

\begin{proposition}\label{prop:gamma-covector} Considering $\gamma^{(i)}(s)$ as a covector, we see that it has the interpretation of mapping a polynomial to its $i$th derivative evaluated at $s$:
\begin{align*}
    \left(\gamma^{(i)}(s)\right)^\ast (f) = f^{(i)}(s).
\end{align*}
\end{proposition}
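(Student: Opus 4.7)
The plan is to unwind the definitions on both sides and observe that the two expressions are literally the same sum, after a routine identification of coefficients. This is essentially a bookkeeping exercise in applying the product-and-derivative formula for $t^j$ against $s^j$.

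First I would write $\gamma(s)$ explicitly in the monomial basis of $k_{m+p-1}[t]$ as
\[
\gamma(s) \;=\; \sum_{j=0}^{m+p-1} s^j t^j,
\]
and then differentiate termwise in the variable $s$ to obtain
\[
\gamma^{(i)}(s) \;=\; \sum_{j=i}^{m+p-1} \frac{j!}{(j-i)!}\, s^{j-i}\, t^j.
\]
In particular, the coefficient of $t^j$ in $\gamma^{(i)}(s)$ is $\frac{j!}{(j-i)!} s^{j-i}$ for $j \geq i$ and is zero otherwise.

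Next, for a polynomial $f(t) = \sum_{j=0}^{m+p-1} a_j t^j \in k_{m+p-1}[t]$, I would use the monomial-basis identification of $k_{m+p-1}[t]$ with its dual: the covector $\left(\gamma^{(i)}(s)\right)^\ast$ acts on $f$ by the dot product of coefficient vectors, giving
\[
\left(\gamma^{(i)}(s)\right)^\ast(f) \;=\; \sum_{j=i}^{m+p-1} \frac{j!}{(j-i)!}\, s^{j-i}\, a_j.
\]
On the other hand, differentiating $f$ in the variable $t$ yields $f^{(i)}(t) = \sum_{j=i}^{m+p-1} \frac{j!}{(j-i)!} a_j\, t^{j-i}$, so evaluating at $t=s$ produces the identical sum.

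There is no real obstacle here; the only subtlety is a notational one, namely keeping straight that $\gamma$ is differentiated with respect to $s$ while $f$ is differentiated with respect to $t$, and that the pairing uses the monomial basis identification. The proposition follows by comparing the two displayed sums.
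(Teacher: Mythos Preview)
Your proof is correct and essentially identical to the paper's: both expand $\gamma^{(i)}(s)$ termwise in the monomial basis, take the dot product with the coefficient vector of $f$, and recognize the result as $f^{(i)}(s)$. The only differences are cosmetic (choice of summation indices and the explicit remark you add about the two variables of differentiation).
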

\begin{proof} We may compute explicitly for $0 \le j \le m+p-1$ that
\begin{align*}
    \gamma^{(j)}(s) = \sum_{r=j}^{m+p-1} \frac{r!}{(r-j)!} s^{r-j} t^r \in k_{m+p-1}[t].
\end{align*}
Therefore for any $f(t) = \sum_{i=0}^{m+p-1} a_i t^i$, we have that
\begin{align*}
    \left( \gamma^{(j)}(s) \right)^\ast(f) = \sum_{r=j}^{m+p-1} \frac{r!}{(r-j)!} a_{r} s^{r-j} = f^{(j)}(s).
\end{align*}
\end{proof}
We note that we may write the Wronski as a determinant of matrices built out of the rational normal curve and the input polynomials. Let $f_1(t), \ldots, f_m(t) \in k_{m+p-1}(t)$ be $m$ linearly independent polynomials of degree at most $m+p-1$, so that their span defines a point on $\Gr_k(m,m+p)$. Let $f_i(t) = \sum_{j=0}^{m+p-1} a_{i,j} t^j$, and define a matrix $M$ comprised of the coefficients of the polynomials $f_i$:
\begin{align*}
    M &= \left( \begin{array}{c} \text{coefficients of } f_1 \\
       \text{coefficients of } f_2 \\ 
      \vdots \\      \text{coefficients of } f_m \end{array} \right) = \begin{pmatrix} a_{1,0} & a_{1,1} & \cdots & a_{1,m+p-1} \\
       a_{2,0} & a_{2,1} & \cdots & a_{2,m+p-1} \\
      \vdots & \vdots & \ddots & \vdots \\
     a_{m,0} & a_{m,1} & \cdots & a_{m,m+p-1} \end{pmatrix}.
\end{align*}
Let $\Gamma(s)$ denote the matrix $k^m \to k_{m+p-1}[t]$ whose $j$th column is given by the coefficients of the polynomial $\gamma^{(j-1)}(s) \in k_{m+p-1}[t]$
\begin{align*}
    \Gamma(s) = \left(\begin{array}{l | l | l | l} \gamma(s) & \gamma'(s) & \cdots & \gamma^{(m-1)}(s) \end{array}\right).
\end{align*}
Phrased differently, the columns of $\Gamma(s)$ are the basis vectors spanning the $m$-plane $F_m(s)$ osculating the rational normal curve at $\gamma(s)$.

\begin{proposition}\label{prop:Wronski-as-det} In the previous notation, one may express the Wronski polynomial of $f_1, \ldots, f_m$ evaluated at a point $s$ as a determinant:
\begin{align*}
    \det(M\cdot \Gamma(s)) = \Wr(f_1, \ldots, f_m)(s).
\end{align*}
\end{proposition}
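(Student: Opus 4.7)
The plan is to unwind the matrix product $M \cdot \Gamma(s)$ entry by entry and identify it with the matrix appearing in the definition of the Wronski.

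First I would recognize, using \autoref{prop:gamma-covector}, that the $(j+1)$st column of $\Gamma(s)$ consists of the coefficients of $\gamma^{(j)}(s)$, and that viewing $\gamma^{(j)}(s)$ as a covector amounts to ``take the $j$th derivative and evaluate at $s$.'' Row $i$ of $M$ is the coefficient vector of $f_i$, so the dot product of row $i$ of $M$ with the $(j+1)$st column of $\Gamma(s)$ is exactly $\bigl(\gamma^{(j)}(s)\bigr)^{\ast}(f_i) = f_i^{(j)}(s)$. Concretely, this is the same calculation as in the proof of \autoref{prop:gamma-covector}, namely
\[
    \sum_{r=0}^{m+p-1} a_{i,r} \cdot \frac{r!}{(r-j)!}\, s^{r-j} \;=\; f_i^{(j)}(s),
\]
where the terms with $r < j$ vanish because the corresponding entries of $\Gamma(s)$ are zero.

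Next I would assemble this into a matrix identity: the $(i, j+1)$ entry of $M \cdot \Gamma(s)$ is $f_i^{(j)}(s)$, so $M \cdot \Gamma(s)$ is precisely the transpose of the matrix whose determinant defines $\Wr(f_1, \ldots, f_m)(s)$. Since the determinant is invariant under transposition, this immediately gives $\det(M \cdot \Gamma(s)) = \Wr(f_1, \ldots, f_m)(s)$.

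There is no real obstacle here; the statement is essentially a bookkeeping exercise once one has \autoref{prop:gamma-covector} in hand. The only mild care needed is in tracking indexing conventions (columns of $\Gamma(s)$ are indexed $1, \ldots, m$ while the derivatives $\gamma^{(j)}(s)$ are indexed $j = 0, \ldots, m-1$) and in noting the implicit transposition between the convention for $M \cdot \Gamma(s)$ and the convention in the definition of $\Wr$.
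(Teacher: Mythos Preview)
Your proposal is correct and follows essentially the same approach as the paper: both compute the $(i,j)$ entry of $M\cdot\Gamma(s)$ as the dot product of the coefficient vector of $f_i$ with that of $\gamma^{(j-1)}(s)$, invoke \autoref{prop:gamma-covector} to identify this with $f_i^{(j-1)}(s)$, and conclude. You are slightly more careful than the paper in explicitly noting that $M\cdot\Gamma(s)$ is the transpose of the Wronskian matrix and invoking invariance of the determinant under transposition, a point the paper leaves implicit.
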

\begin{proof} Multiplying a row of $M$ with a column of $\Gamma(s)$ is the same as taking the dot product of $\gamma^{(i)}(s)$ with $f_j(t)$, yielding $f^{(i)}(s)$ by \autoref{prop:gamma-covector}. It follows then that the determinant of the product of $M$ and $\Gamma(s)$ yields the Wronski polynomial evaluated at $s$.
\end{proof}

\begin{corollary}\label{cor:H-meets-Fm} Consider $f_1, \ldots, f_m$ as covectors, let $H$ be the $p$-plane defined by their simultaneous vanishing, and let $s\in k$ be a fixed scalar. Then the Wronski polynomial $\Wr(f_1, \ldots, f_m)(t)$ vanishes at $s$ if and only if $H$ meets $F_m(s)$ non-trivially.
\end{corollary}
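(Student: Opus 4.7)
The plan is to reduce the corollary to a simple linear-algebra observation about the matrix product $M\cdot \Gamma(s)$ appearing in \autoref{prop:Wronski-as-det}. By that proposition, the Wronski polynomial vanishes at $s$ if and only if $\det(M\cdot \Gamma(s))=0$, i.e.\ if and only if the $m\times m$ matrix $M\cdot \Gamma(s)$ is singular. So I want to identify the kernel of this matrix with $H\cap F_m(s)$.

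First I would interpret the three pieces geometrically. Think of $M$ as the $m\times(m+p)$ matrix of a linear map $k^{m+p}\to k^m$ whose rows are exactly the covectors $f_1,\ldots,f_m$; by definition its kernel is the $p$-plane $H$. Think of $\Gamma(s)$ as the $(m+p)\times m$ matrix whose columns are $\gamma(s),\gamma'(s),\ldots,\gamma^{(m-1)}(s)$; since these vectors form a basis of $F_m(s)$ (a fact already recalled in the construction of the osculating flag in \autoref{eqn:osculating-flag-F}), the map $\Gamma(s)\colon k^m\to k^{m+p}$ is injective with image exactly $F_m(s)$.

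Next I would chase a kernel vector. The composite $M\cdot \Gamma(s)\colon k^m\to k^m$ is singular precisely when there is a nonzero $v\in k^m$ with $M\Gamma(s)v=0$. Because $\Gamma(s)$ is injective, $\Gamma(s)v$ is a nonzero vector of $F_m(s)$, and the relation $M(\Gamma(s)v)=0$ says exactly that $\Gamma(s)v\in\ker M=H$. Conversely, any nonzero vector $w\in H\cap F_m(s)$ can be written as $w=\Gamma(s)v$ for a unique nonzero $v$ by injectivity of $\Gamma(s)$, and then $M\Gamma(s)v=Mw=0$. Thus $M\cdot \Gamma(s)$ is singular if and only if $H\cap F_m(s)\ne 0$, which combined with \autoref{prop:Wronski-as-det} gives the corollary.

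There is essentially no obstacle; the content is all packaged in \autoref{prop:Wronski-as-det} and in the identification of $\ker M$ with $H$ and of $\im \Gamma(s)$ with $F_m(s)$. The only point worth a sentence of care is the implicit assumption that $f_1,\ldots,f_m$ are linearly independent, so that their simultaneous vanishing really cuts out a $p$-plane $H$; this is guaranteed because $W=\spn\{f_1,\ldots,f_m\}$ is assumed to be a point of $\Gr_k(m,m+p)$.
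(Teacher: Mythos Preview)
Your proof is correct and follows essentially the same approach as the paper: both reduce to the singularity of $M\cdot\Gamma(s)$ via \autoref{prop:Wronski-as-det} and then identify a kernel vector with a nonzero element of $H\cap F_m(s)$. Your version is slightly more explicit in spelling out both directions and in flagging the linear independence of the $f_i$, but the argument is the same.
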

\begin{proof} Linear dependence in the columns of $M \cdot \Gamma(s)$ implies that there is a non-trivial linear combination of the covectors $\gamma(s), \ldots, \gamma^{(m-1)}(s)$ which vanishes on each $f_i(t)$. This linear combination provides a point on the intersection of $F_m(s)$, which is the span of these covectors, and on $H$, the plane of covectors annihilating $\spn \left\{ f_1, \ldots, f_m \right\}$.
\end{proof}

We can rephrase this result slightly to state that the plane defined by the span of $f_1, \ldots, f_m$ intersects a $p$-plane dual to $F_m(s)$ nontrivially. In order to make this precise, we must discuss a natural duality arising on Grassmannian varieties.

\subsection{Grassmann Duality}
Let $\Gr_k\left(m,(k_{m+p-1}[t])^\ast \right)$ denote the collection of $m$-planes in the space of linear forms on $k_{m+p-1}[t]$, and let $\spn\left\{ h_1, \ldots, h_m \right\}$ denote a point on this Grassmannian. We may consider the action of the $h_i$'s on $k_{m+p-1}[t]$. The subspace of $k_{m+p-1}[t]$ given by those polynomials $f$ so that $h_1(f) = 0$ is a subspace of codimension 1. The vanishing locus of $m$ linearly independent linear forms imposes $m$ linearly independent conditions, and thus produces a subspace of $k_{m+p-1}[t]$ of codimension $m$. That is to say, each such point $\left\{ h_1, \ldots, h_m \right\}$ canonically defines a $p$-plane in $k_{m+p-1}[t]$. This yields a canonical (i.e. \textit{basis-independent}) isomorphism
\begin{align*}
    \Gr_k \left( m, (k_{m+p-1}[t])^\ast \right) \cong \Gr_k(p, k_{m+p-1}[t]).
\end{align*}

This isomorphism is called \textit{Grassmann duality}. It is an important property of Grassmannians, and for instance can be used to explain the fact that $d(m,p) = d(p,m)$. Grassmann duality is a crucial tool for our geometric interpretation of the Wronski, and shall be used heavily in this paper.

\begin{remark}\label{rmk:flags-in-duality} We may define a flag $E_\bullet(s)$ in $k_{m+p-1}$ where $E_i(s) = \spn\left\{ (t-s)^{m+p-1}, \ldots, (t-s)^{m+p-i-1}\right\}$ is the space of those polynomials which vanish at $s$ to order $\ge m+p-i$. Then for a polynomial $f$, the following are equivalent:
\begin{enumerate}
    \item $f \in E_i(s)$
    \item $(t-s_i)^{m+p-i} \big| f(t)$
    \item $f$, viewed as a linear form, annihilates $F_{m+p-i}(s)$, the osculating plane to the rational normal curve at $s$.
\end{enumerate}
\end{remark}
Thus the flags $E_\bullet(s)$ and $F_\bullet(s)$ are dual \cite[Theorem~10.8]{Sottile}. This allows us to revisit \autoref{cor:H-meets-Fm} to say that $\Wr(f_1, \ldots, f_m)(t)$ vanishes at $s$ if and only if the $m$-plane $W = \spn \left\{ f_1, \ldots, f_m \right\}$ intersects $E_p(s)$ non-trivially. This perspective allows us to develop a geometric intuition for the Wronski.

\begin{proposition} \textit{(Geometric interpretation of the Wronski)} Let $s_1, \ldots, s_{mp}$ be distinct scalars in $k$. Then $\Wr^{-1} \left( \prod_{i=1}^{mp} (t-s_i) \right)$ consists of those $m$-planes which intersect each of $E_p(s_1), \ldots, E_p(s_{mp})$ non-trivially.
\end{proposition}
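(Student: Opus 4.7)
The plan is to combine \autoref{cor:H-meets-Fm} with \autoref{rmk:flags-in-duality} and then invoke a dimension count on the polynomial roots. Indeed, as already noted just after \autoref{rmk:flags-in-duality}, the two results together give the translation: for any scalar $s \in k$ and any $m$-plane $W = \spn\{f_1, \ldots, f_m\}$, the Wronski polynomial $\Wr(W)(t)$ vanishes at $t=s$ if and only if $W$ intersects $E_p(s)$ nontrivially. With this translation in hand, the proposition reduces to checking that ``vanishing at all of $s_1, \ldots, s_{mp}$'' is equivalent to being a scalar multiple of $\prod_{i=1}^{mp}(t-s_i)$ as an element of $\P_k^{mp}$.

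For the forward direction, I would pick $W \in \Wr^{-1}\left(\prod_{i=1}^{mp}(t-s_i)\right)$. By definition of the Wronski map to projective space, this means $\Wr(W)(t) = c \prod_{i=1}^{mp}(t-s_i)$ for some nonzero $c \in k$. In particular $\Wr(W)(s_i) = 0$ for each $i$, so by the translation above $W$ meets each $E_p(s_i)$ nontrivially.

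For the reverse direction, suppose that $W$ meets each $E_p(s_i)$ nontrivially for $i = 1, \ldots, mp$. The translation yields $\Wr(W)(s_i) = 0$ for each $i$. Since $\Wr(W)(t)$ is a polynomial of degree at most $mp$ with $mp$ distinct roots $s_1, \ldots, s_{mp}$, it is uniquely determined up to scalar: either $\Wr(W) \equiv 0$, or $\Wr(W)(t) = c \prod_{i=1}^{mp}(t-s_i)$ for some nonzero $c \in k$. In the latter case $W$ lies in the fiber, as desired.

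The only possible obstacle is the degenerate situation where $\Wr(W)$ vanishes identically, which lies in the indeterminacy locus of the rational map $\Wr: \Gr_k(m,m+p) \dashrightarrow \P_k^{mp}$ rather than in any honest fiber. This is a mild point to address, not a genuine difficulty: on the open locus on which $\Wr$ is defined as a morphism, the dichotomy above collapses to the single case $\Wr(W) = c \prod(t-s_i)$ with $c \neq 0$, and the equivalence is exact. No further work beyond the translation and the polynomial-degree argument is needed.
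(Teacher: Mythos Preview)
Your argument is correct and is precisely the route the paper intends: the proposition is stated without proof, as an immediate consequence of the sentence preceding it (which combines \autoref{cor:H-meets-Fm} with \autoref{rmk:flags-in-duality}), together with the obvious degree bound on the Wronski polynomial. Your only extra care is the remark about the possibility $\Wr(W)\equiv 0$; under the paper's standing hypothesis that $(m+p-1)!$ is invertible in $k$, the Wronskian of $m$ linearly independent polynomials of degree at most $m+p-1$ cannot vanish identically, so this degenerate case does not arise and $\Wr$ is an honest morphism rather than merely rational.
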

In particular this tells us that the degree of the Wronski provides a solution to an enumerative problem.

\subsection{Schubert cells and the Pl\"{u}cker embedding} Frequently the Grassmannian can be understood better once it has been embedded in projective space. Viewing the vectors spanning a plane as a wedge power, we can sit the Grassmannian inside a suitably large projective space.

\begin{definition} The \textit{Pl\"{u}cker embedding} for the Grassmannian $\Gr_k(m,k_{m+p-1}[t])$ is defined to be the closed embedding
\begin{align*}
    \Pl : \Gr(m, k_{m+p-1}[t]) &\clhook \Proj \left( \wedge^m k_{m+p-1}[t] \right) \\
    \spn \left\{ f_1, \ldots, f_m \right\} &\mapsto \left[ f_1 \wedge \cdots \wedge f_m \right].
\end{align*}
\end{definition}

\begin{notation} We denote by $\binom{[m+p]}{m}$ the following set of integer sequences:
\begin{align*}
    \binom{[m+p]}{m} &= \left\{ (\alpha_1, \ldots, \alpha_m) \ : \ 1\le \alpha_1 < \alpha_2 < \cdots < \alpha_m \le m+p \right\}.
\end{align*}
\end{notation}

Provided we have \textit{chosen a basis} $e_1, \ldots, e_{m+p}$ for $k_{m+p-1}[t]$, we see that the projective space $\Proj \left( \wedge^m k_{m+p-1}[t] \right) = \P^{\binom{m+p}{m}-1}$ inherits a basis consisting of the coordinates
\begin{align*}
    P_\alpha = e_{\alpha_1} \wedge e_{\alpha_2} \wedge \cdots \wedge e_{\alpha_m},
\end{align*}
where $\alpha$ is varying over all multiindices in $\binom{[m+p]}{m}$. Given any $W = \spn \left\{ f_1, \ldots, f_m \right\}$ on $\Gr_k(m,m+p)$, we may embed it in projective space, where it must be expressible as a $k$-linear sum over the $P_\alpha$'s. We refer to the coefficients appearing in this sum as the \textit{Pl\"{u}cker coordinates} of $W$, and denote them by $z_\alpha(W)$:
\begin{align*}
    f_1 \wedge \cdots \wedge f_m = \sum_{\alpha\in \binom{[m+p]}{m}} z_\alpha(W) P_\alpha.
\end{align*}
How do we compute these $z_\alpha(W)$'s? We remark that we may write the coefficients of $f_1, \ldots, f_m$ in the basis $e_1, \ldots, e_{m+p}$, yielding an $m \times (m+p)$ matrix over $k$. The coefficient $z_\alpha(W)$ associated to a multiindex $\alpha = \left( \alpha_1, \ldots, \alpha_m \right)$ is precisely the determinant of the $m \times m$-minor of this matrix given by the $\alpha_i$th columns. As the image of the Plucker embedding is a projective space, any ambiguities arising in expressing $W$ as a matrix are resolved; that is, the Pl\"{u}cker coordinates corresponding to $W$ are well-defined.

\begin{remark} It is a classical fact that the Pl\"{u}cker embedding is injective; that is, a point on the Grassmannian can be recovered from its Pl\"{u}cker coordinates.
\end{remark}

Grassmann duality translates to duality on Pl\"{u}cker coordinates as well. In order to demonstrate this, we must first define the dual of a multiindex.

\begin{definition} Let $\alpha \in \binom{[m+p]}{m}$ be a multiindex $(\alpha_1, \ldots, \alpha_{m})$. Denote by $\alpha^c \in \binom{[m+p]}{p}$ the complement of $\alpha$ in $(1, \ldots, m+p)$. Then we define the \textit{dual} multiindex $\alpha^\ast$ whose entries are $m+p+1 - (\alpha^c)_i$.
\end{definition}

\begin{example} If $m=2$ and $p=3$, let $\alpha = (1,4)$. Then  $\alpha^c = (2,3,5)$, and $\alpha^\ast = (1,3,4)$.
\end{example}

\begin{proposition}\label{prop:plucker-coords-gr-duality} Consider the Grassmann duality isomorphism
\begin{align*}
    \Gr_k(m, k_{m+p-1}[t]^\ast) \xto{\sim} \Gr_k(p,k_{m+p-1}[t]),
\end{align*}
given by sending an $m$-plane of covectors to the $p$-plane it annihilates. Let $\alpha \in \binom{[m+p]}{m}$, and fix a basis $\left\{ e_i \right\}$ of $k_{m+p-1}[t]$ with dual basis $\left\{ e_i^\ast \right\}$. Then for any $W^\ast \in \Gr_k(m, k_{m+p-1}[t]^\ast)$, where $W$ is the plane it annihilates, we have that
\begin{align*}
    z_{\alpha}(W^\ast) = z_{\alpha^\ast}(W).
\end{align*}
That is, the $\alpha$th Pl\"{u}cker coordinate of $W^\ast$ in the dual basis $e_i^\ast$ is the $\alpha^\ast$th Pl\"{u}cker coordinate of $W$ in the basis $\left\{ e_i \right\}$.
\end{proposition}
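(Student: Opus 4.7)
The plan is to verify the identity by a direct matrix computation based on classical Plücker duality. First, choose a spanning set $h_1, \ldots, h_m$ for $W^\ast$ and record their coefficients in the dual basis $\{e_i^\ast\}$ as the rows of an $m \times (m+p)$ matrix $A$; similarly choose $v_1, \ldots, v_p$ spanning $W$ and form the $p \times (m+p)$ matrix $B$ of their coefficients in $\{e_i\}$. The annihilation condition $h_j(v_l) = 0$ becomes $AB^T = 0$, and by definition $z_\alpha(W^\ast) = \det A[\alpha]$ and $z_\beta(W) = \det B[\beta]$, where $M[\gamma]$ denotes the submatrix picking out the columns indexed by $\gamma$.

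The first step is to establish Plücker duality in the form
\[\det A[\alpha] = \epsilon(\alpha) \cdot \det B[\alpha^c],\]
where $\epsilon(\alpha)$ denotes the sign of the shuffle permutation $(\alpha_1, \ldots, \alpha_m, \alpha^c_1, \ldots, \alpha^c_p)$ in $S_{m+p}$. This can be proved by a gauge argument: acting with $\GL_m(k)$ on $\{h_j\}$ one may assume $A[\alpha] = I_m$, in which case $A$ takes the block form $(I_m \mid C)$ after permuting columns so that $\alpha$ occupies the first $m$ slots. The kernel of $A$ is then spanned by explicit rows whose $\alpha^c$-minor is $\pm I_p$, and the sign arises from reverting the column permutation.

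The second step is to compare $\det B[\alpha^c]$ and $\det B[\alpha^\ast]$. By definition $\alpha^\ast$ is obtained from $\alpha^c$ by applying the involution $j \mapsto m+p+1-j$ and reordering to be increasing, and the reordering picks up the sign of the reversal permutation on the $p$ indices of $\alpha^c$. A direct combinatorial count of inversions shows that this reversal sign equals $\epsilon(\alpha)$, so the two signs cancel to produce the clean identity $z_\alpha(W^\ast) = z_{\alpha^\ast}(W)$.

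The main obstacle is the careful bookkeeping of these signs, which reduces to an elementary parity identity in the entries of $\alpha$ verifiable by counting inversions. The low-rank case $m = 2$, $p = 3$, $\alpha = (1,4)$ from the preceding example already exhibits the pattern and serves as a convenient sanity test.
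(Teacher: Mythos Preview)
The paper does not supply a proof of this proposition, so there is no original argument to compare against directly. That said, your Step~2 contains a genuine error. You write that $\alpha^\ast$ is obtained from $\alpha^c$ by applying the involution $j \mapsto m+p+1-j$ and then reordering, and you conclude that $\det B[\alpha^c]$ and $\det B[\alpha^\ast]$ differ only by the sign of that reordering. But the involution $j\mapsto m+p+1-j$ does not merely permute the $p$ chosen columns of $B$: it replaces them by a \emph{different} set of columns. In general $\alpha^c$ and $\alpha^\ast$ are distinct $p$-element subsets of $\{1,\ldots,m+p\}$ --- in your own test case $\alpha^c=(2,3,5)$ while $\alpha^\ast=(1,3,4)$ --- so $z_{\alpha^c}(W)=\det B[\alpha^c]$ and $z_{\alpha^\ast}(W)=\det B[\alpha^\ast]$ are two genuinely different Pl\"ucker coordinates of $W$, not the same one up to sign. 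No amount of sign bookkeeping can convert one into the other for a general $p$-plane $W$.

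This is not a repairable oversight in your argument: the identity your Step~1 correctly produces is the classical $z_\alpha(W^\ast)=\epsilon(\alpha)\,z_{\alpha^c}(W)$, and there is no universal relation between $z_{\alpha^c}(W)$ and $z_{\alpha^\ast}(W)$ valid for all $W$. Indeed, already for $m=p=1$ the map $\alpha\mapsto\alpha^\ast$ is the identity on $\binom{[2]}{1}$, so the proposition as literally stated would force $W^\ast$ and $W$ to have the same projective Pl\"ucker point; this fails for $W^\ast=\spn\{e_1^\ast\}$, whose annihilator is $W=\spn\{e_2\}$. The statement therefore seems to presuppose an additional convention --- for instance, reversing the order of the dual basis, under which taking the $\alpha$th minor becomes taking the $\tau(\alpha)$th minor for $\tau(j)=m+p+1-j$, and then $(\tau(\alpha))^c=\alpha^\ast$ --- that is not made explicit in the text. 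Your proof cannot be completed without first pinning down and incorporating whatever that convention is.
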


\begin{example}\label{ex:plucker-coords-dual-flags} We have that $z_\alpha (F_m(s)) = z_{\alpha^\ast}(E_p(s))$ for any scalar $s$ and any multiindex $\alpha$.
\end{example}

\subsection{Background from \texorpdfstring{$\mathbb{A}^1$}{A1}-enumerative geometry}\label{subsec:a1-degree}

Solving an enumerative problem can often be reduced to the computation of a certain characteristic number of a vector bundle, under certain orientation data and expected dimension assumptions. We first begin with a moduli space of possible solutions to the enumerative problem (for the example of lines on a cubic surface, our moduli space would simply be the Grassmannian of lines in projective 3-space). Following this, we construct an appropriate vector bundle over the moduli space together with a section of the bundle whose zeros are precisely the solutions to the enumerative problem at hand, and which are assumed to be isolated points. In the presence of certain orientation data for the bundle, the solution to our enumerative problem is the Euler class of the bundle, which by the Poincar\'e--Hopf theorem can be thought of as a sum of local indices of the section at points in its zero locus. On a coordinate patch which is compatible with our orientation data, these local indices can be computed as local Brouwer degrees of our section at points in the vanishing locus. Over the complex numbers, the local Brouwer degree at any simple zero will be equal to 1, which we read as a Boolean value informing us that this point on the moduli space is a solution to the enumerative problem (at non-simple points, it will encode the multiplicity of the solution as a natural number). Over other fields, a richer definition of Brouwer degree can produce a wider variety of data at a single solution to an enumerative problem, often revealing deep information about the ambient geometry that was invisible in the complex setting.

The algebrao-geometric analogue of the Brouwer degree that we use is called the $\A^1$\textit{-Brouwer degree}, first defined by Morel \cite{Morel-ICM}, which is valued in the Grothendieck--Witt group of symmetric bilinear forms over $k$. This tool has been instrumental in the development of $\A^1$\textit{-enumerative geometry} (or \textit{enriched enumerative geometry}). This program has grown in recent years due to seminal work of Levine \cite{Levine}, Kass and Wickelgren \cite{KW-EKL}, Bachmann and Wickelgren \cite{BW3}, among others. Recent results include an enriched B\'{e}zout's theorem \cite{Stephen-Bezout}, an enriched count of lines on a quintic threefold \cite{Sabrina}, and a count of conics meeting eight lines \cite{DGGM}. For further reading on this field we refer the reader to the survey papers \cite{Braz-expos,Pauli-expos}.

In order to compute local $\A^1$-degrees of sections of vector bundles, we will first need some analogue of charts from differential topology. This is provided by \textit{Nisnevich coordinates}, defined by \cite[Definition~17]{KW-arithm}.

\begin{definition}\label{def:Nisnevich-coordinates} Let $X$ be a smooth $n$-scheme, $p\in X$ a closed point, and $U \ni p$ an open neighborhood. Then we say that an \'etale map
\begin{align*}
    \phi: U &\to \A^n_k,
\end{align*}
which induces an isomorphism on the residue field at $p$, defines \textit{Nisnevich coordinates near $p$}. We say this defines Nisnevich coordinates \textit{centered at} $p$ if $\phi(p) = 0$.
\end{definition}

\begin{definition}\label{def:Nis-coords-trivialization-tangent-space} Let $X$ be a smooth $n$-scheme admitting Nisnevich coordinates $\phi: U \to \A^n_k = \Spec k[x_1, \ldots, x_n]$ near a point $p \in X$. Affine space admits a standard trivialization, given by the basis elements $\frac{d}{dx_1}, \ldots, \frac{d}{dx_n}$ on $T \A^n_k$. Since $\phi$ is \'etale, it induces an isomorphism
\begin{align*}
    \left. TX \right|_{ U } \xto{\sim} T\A^n_k,
\end{align*}
and by pulling back the basis elements $\frac{d}{dx_i}$, we obtain a basis for $\left. TX \right|_{ U }$. We refer to these basis elements as the \textit{distinguished trivialization of} $\left. TX \right|_{ U }$ arising from the Nisnevich coordinates $\phi$.
\end{definition}

\begin{example}\label{ex:Zar-open-immersion-Nis-coords} If $f:\A^n_k \to X$ is a Zariski open immersion, then by denoting $U:= \im(f)$, the function $U \to \A^n_k$ given by $y \mapsto f^{-1}(y)$ is \'etale, and moreover defines Nisnevich coordinates.
\end{example}

\begin{definition}\label{def:relatively-oriented} (c.f. \cite{OkTel}, \cite{KW-arithm}, \cite[\S4.3]{Morel}) Suppose $E \to X$ is a vector bundle of rank $n$ over a smooth, projective $n$-scheme over a field $k$. Then we say $E$ is \textit{relatively oriented} over $X$ if there is an isomorphism
\begin{align*}
    j: \HOM \left( \det TX, \det E \right) \cong \mathscr{L}^{\otimes 2},
\end{align*}
for $\mathscr{L} \to X$ a line bundle. Any such choice of isomorphism $j$ is called a \textit{relative orientation}.
\end{definition}

\begin{definition} For an open set $U \subseteq X$, and a relatively oriented bundle $(E,j)$ we say a section $s\in \Gamma \left( U, \HOM \left( \det TX, \det E \right) \right)$ is \textit{a square} if its image in $\Gamma(U, \mathscr{L}^{\otimes 2})$ is a square, meaning it is of the form $s' \otimes s'$ for some $s' \in \Gamma(U, \mathscr{L})$.
\end{definition}

Now suppose we had Nisnevich coordinates $\phi: U \to \A^n_k$ near a point $p\in X$, and a relative orientation $j: \HOM \left( \det TX, \det E \right) \xto{\sim} \mathscr{L}^{\otimes 2}$.
As in \autoref{def:Nis-coords-trivialization-tangent-space}, the coordinates $\phi$ induce a trivialization of $\left. TX \right|_{ U }$, and by restricting $U$ we may assume that there is a trivialization of the vector bundle $E$ over $U$, meaning an isomorphism $\psi: \left. E \right|_{ U } \cong \A^n_k$.

\begin{definition} In the situation above, we say the trivialization $\psi$ is \textit{compatible} with the Nisnevich coordinates $\phi$ \textit{and} the relative orientation $(E,j)$ if the associated element in
\begin{align*}
    \HOM \left( \det \left. TX \right|_{ U }, \det \left. E \right|_{ U }  \right)
\end{align*}
taking our distinguished basis of $\det \left. TX \right|_{ U }$ to the distinguished basis of $\det \left. E \right|_{ U }$ is a square.
\end{definition}

If $\sigma: X \to E$ is a section, $p$ is an isolated zero of $\sigma$, and $U \ni p$ an open neighborhood not containing any other points of $Z(\sigma)$, we can pull back the map $\psi \circ \sigma_U$ by $\phi$ to an endomorphism of affine space, which we denote by $(f_1, \ldots, f_n)$, yielding the following diagram:
\[ \begin{tikzcd}
    U\dar["\phi" left]\rar["\left. \sigma \right|_{ U }" above] & \left. E \right|_{ U }\dar["\psi" right]\\
    \A^n_k\rar[dashed,"{(f_1, \ldots, f_n)}" below] & \A^n_k.
\end{tikzcd} \]

\begin{definition}\label{def:local-index-sigma} The \textit{local index} of $\sigma$ at $p$ is defined by
\begin{align*}
    \ind_p \sigma = \deg_{\phi(p)}^{\A^1} (f_1, \ldots, f_n),
\end{align*}
where $\deg_{\phi(p)}^{\A^1}(f)$ is the local $\A^1$-Brouwer degree of $f$ at $\phi(p)$, that is, it is a class in the Grothendieck--Witt group $\GW(k)$. 
\end{definition}
For techniques and code for computing such local degrees, we refer the reader to \cite{Bezoutians}.

\begin{definition}\cite[Definition 33]{KW-arithm} Let $E \to X$ be a relatively oriented vector bundle of rank $r$ over a smooth $r$-dimensional scheme $X \in \Sch_k$, and let $\sigma : X \to E$ be a section of the bundle with isolated zeros so that Nisnevich coordinates exist near every zero. We define the \textit{Euler number}
\begin{align*}
	e(E,\sigma) = \sum_{p\in Z(\sigma)} \ind_p \sigma,
\end{align*}
where we are summing over closed points $p$ where $\sigma$ vanishes.
\end{definition}

\begin{proposition} (\cite[Theorem~1.1]{BW3}) The Euler class of a relatively oriented vector bundle $e(E,\sigma)$ over a smooth and proper scheme $X$ is independent of the choice of section.
\end{proposition}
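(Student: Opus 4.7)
The plan is to identify the sum $\sum_{p\in Z(\sigma)} \ind_p\sigma$ with a cohomological class of the bundle $E$ itself, one which manifestly does not see the section at all; then section-independence follows because different sections both compute this same class.

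Concretely, I would first construct an intrinsic Euler class $e(E) \in \widetilde{CH}^n(X, \det E^\vee)$ in the Chow--Witt group of $X$ twisted by $\det E^\vee$ (or equivalently a class in $H^n(X, \mathbf{K}^{MW}_n(\det E^\vee))$, or a morphism in $\SH(k)$ from the Thom spectrum of $TX$ to that of $E$). This class is defined purely from the bundle data by pulling back the Thom class $\mathrm{th}(E)$ along the zero section, so it is section-independent. The relative orientation $j$ then lets us push forward to $\widetilde{CH}^n(X) \to \GW(k)$ along the structure map of the smooth proper scheme $X$, producing a well-defined element $e(E) \in \GW(k)$ depending only on $(E, j)$.

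Second, I would show that for any section $\sigma$ with only isolated zeros admitting Nisnevich coordinates, the pushforward $e(E)$ can be computed as the sum of local contributions at $Z(\sigma)$. The idea is a standard excision / deformation-to-the-zero-section argument: the class $\sigma^* \mathrm{th}(E)$ is supported on $Z(\sigma)$, and the inclusion $Z(\sigma) \hookrightarrow X$ yields a decomposition in the Gysin long exact sequence of Chow--Witt groups expressing $e(E)$ as a sum of local classes $\sum_{p \in Z(\sigma)} e_p(E,\sigma)$.

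Third, the core technical input is to identify each local class $e_p(E,\sigma)$ with the local $\A^1$-degree $\ind_p\sigma$ of \autoref{def:local-index-sigma}. This is where the Nisnevich coordinates and compatible trivialization come in: they pull back $\sigma$ to an endomorphism $(f_1,\ldots,f_n)$ of $\A^n_k$ with an isolated zero at $\phi(p)$, and one must match the cohomological local contribution with the Scheja--Storch / EKL symmetric bilinear form on $k[x_1,\ldots,x_n]_{(x)} / (f_1,\ldots,f_n)$, which by the work of Kass--Wickelgren \cite{KW-EKL} represents $\deg_{\phi(p)}^{\A^1}(f_1,\ldots,f_n)$ in $\GW(k)$. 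The compatibility condition of the trivialization with $j$ up to a square ensures that the local class in $\GW(k)$ is well-defined up to the equivalence relation built into the Grothendieck--Witt ring.

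The main obstacle is precisely the third step: translating between the abstract cohomological Euler class and the concrete matrix-algebraic local $\A^1$-degree. This requires a careful Thom-isomorphism identification that respects twists by $\det E^\vee$ and the relative orientation, and it is what occupies the bulk of \cite{BW3}. Once this identification is in hand, section-independence is automatic because the right-hand side of $e(E) = \sum_p \ind_p\sigma$ is now known to equal an invariant of $(E,j)$ alone.
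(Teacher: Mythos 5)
The paper itself offers no proof of this proposition; it is quoted directly from \cite{BW3}, and your outline is a faithful high-level sketch of the strategy of that reference: define a section-independent Euler class cohomologically (equivalently via Thom classes in $\SH(k)$ or Chow--Witt groups), localize it at the zero locus of a section, and identify each local contribution with the local $\A^1$-degree via the Scheja--Storch/EKL form of \cite{KW-EKL}, with the relative orientation controlling the twist. So your proposal is essentially the same approach as the source the paper relies on, with the real technical content correctly located in the third step.
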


We can also ask about how we might wield Nisnevich coordinates to compute a global $\A^1$-degree of a morphism between suitably nice smooth $n$-schemes. This notion is based off forthcoming work of Kass, Levine, Solomon and Wickelgren \cite{KLSW}, and was discussed briefly in the expository paper \cite[\S8]{Pauli-expos}. See also \cite[2.53]{Morel-Sawant} for the degree of an endomorphism of projective space, and \cite{Morel} for the degree of an endomorphism of $\P^n/\P^{n-1}$.

\begin{definition}\label{def:relative-orientation-map} (\cite{KLSW}) Let $f: X \to Y$ be a finite map of smooth $n$-schemes over a field $k$. We say that $f$ is \textit{oriented} if $\HOM \left( \det TX, \det f^\ast TY \right)$ is a relatively oriented vector bundle over $X$. Phrased differently, a \textit{relative orientation} for $f$ is a choice of isomorphism
\begin{align*}
    j: \HOM(\det TX, \det f^\ast TY) \xto{\sim} \mathscr{L}^{\otimes 2},
\end{align*}
for $\mathscr{L} \to X$ a line bundle over $X$.
\end{definition}

\begin{definition} (\cite{KLSW})  Let $U \subseteq X$ and $V \subseteq Y$ be open sets such that $f(U) \subseteq V$. We say that Nisnevich coordinates $\phi: U \to \A^n_k$ and $\psi:V \to \A^n_k$ are \textit{compatible with the relative orientation} $j$ if the distinguished section of
\begin{align*}
    \Gamma(U, \HOM(\det \left. TX \right|_{ U }, \det \left. f^\ast TY \right|_{ U })
\end{align*}
is a square.
\end{definition}

\begin{theorem} (\cite{KLSW}, c.f. \cite[8.7]{Pauli-expos}) For a finite oriented map $f: X \to Y$ of smooth $k$-schemes, with $Y$ an $\A^1$-connected scheme, we have a well-defined degree valued in $\GW(k)$, defined by $\deg^{\A^1}(f) = \sum_{p\in f^{-1}(q)} \deg_p^{\A^1}(f)$.
\end{theorem}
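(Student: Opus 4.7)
The plan is to check three things: (i) each local degree $\deg_p^{\A^1}(f)$ is independent of the choice of compatible Nisnevich coordinates; (ii) the sum $\sum_{p \in f^{-1}(q)} \deg_p^{\A^1}(f)$ coincides with the Euler number of $f^\ast TY$ equipped with a canonical section built from $f$ and a chart near $q$; and (iii) this Euler number is independent of $q$, using the $\A^1$-connectedness of $Y$.

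For (i), fix compatible Nisnevich coordinates $\phi: U \to \A^n_k$ near $p$ and $\psi: V \to \A^n_k$ near $q$ with $f(U) \subseteq V$, form the induced map $\psi \circ f \circ \phi^{-1} : \A^n_k \dashrightarrow \A^n_k$, and take its local $\A^1$-degree at $\phi(p)$. Any two compatible coordinate choices differ by an automorphism whose associated section of $\HOM(\det TX|_U, \det f^\ast TY|_U)$ is a square; unraveling the definitions, this amounts to a change of basis whose Jacobian determinant is a square in the local ring at $p$. Standard manipulation of the \EKL\ form (cf.\ \cite{KW-EKL}) then shows the resulting class in $\GW(k)$ is unchanged, so $\deg_p^{\A^1}(f)$ is intrinsically defined.

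For (ii), observe that a relative orientation for $f$ in the sense of \autoref{def:relative-orientation-map} is precisely a relative orientation on the rank $n$ vector bundle $f^\ast TY \to X$ in the sense of \autoref{def:relatively-oriented}. Choosing Nisnevich coordinates $\psi : V \to \A^n_k$ centered at $q$ trivializes $TY|_V$ via the distinguished basis of \autoref{def:Nis-coords-trivialization-tangent-space}, and pulling back along $f$ trivializes $f^\ast TY|_{f^{-1}(V)}$. The composition $\psi \circ f$ defines a section $\sigma_q$ of $f^\ast TY$ on $f^{-1}(V)$ whose zero locus is exactly $f^{-1}(q)$, and unraveling \autoref{def:local-index-sigma} yields
\[
    \sum_{p \in f^{-1}(q)} \deg_p^{\A^1}(f) = e(f^\ast TY, \sigma_q).
\]

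The main obstacle is (iii). For another point $q' \in Y$ with coordinates $\psi'$, one obtains a second section $\sigma_{q'}$, but the two sections are only defined on open neighborhoods of $f^{-1}(q)$ and $f^{-1}(q')$, so the invariance statement \cite[Theorem~1.1]{BW3} does not apply directly. I would use $\A^1$-connectedness of $Y$ to produce a chain of naive $\A^1$-homotopies in $Y$ joining $q$ to $q'$; each such homotopy pulls back along $f$ to a one-parameter family of sections of a relatively oriented bundle on $X$, and fiberwise application of the homotopy invariance of the $\A^1$-Euler number gives equality of Euler numbers at the endpoints. The subtle point is verifying that along each homotopy the induced family has only isolated zeros with uniformly compatible Nisnevich coordinates; one handles this using finiteness of $f$ together with a generic perturbation of the chosen $\A^1$-path when necessary.
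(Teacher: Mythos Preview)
The paper does not supply a proof of this theorem at all: it is stated in the preliminaries as a result due to \cite{KLSW} (described elsewhere in the paper as ``forthcoming work of Kass, Levine, Solomon and Wickelgren''), with a pointer to the exposition in \cite[8.7]{Pauli-expos}. There is therefore no argument in the paper to compare your proposal against.

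That said, a few comments on your sketch. Step (i) is the standard well-definedness check and is fine in outline. Step (ii) has a genuine gap: the section $\sigma_q$ you construct lives only on the open set $f^{-1}(V)$, not on all of $X$, so the expression $e(f^\ast TY,\sigma_q)$ is not an Euler number in the sense of the paper (which requires a global section with isolated zeros on a proper scheme). You cannot directly invoke \cite[Theorem~1.1]{BW3} for such a locally defined section, and you acknowledge this difficulty yourself at the start of (iii). The fix you propose---pulling back naive $\A^1$-homotopies in $Y$ and applying homotopy invariance fiberwise---is plausible in spirit but the phrase ``generic perturbation of the chosen $\A^1$-path when necessary'' is doing a lot of unexplained work; in particular you have not said why such a perturbation stays within the class of $\A^1$-homotopies or why isolated zeros with compatible coordinates persist. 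A cleaner route, closer to what is sketched in \cite{Pauli-expos}, is to use the finiteness of $f$ to push forward a relative orientation class and identify $\deg^{\A^1}(f)$ with a trace or transfer construction that is manifestly independent of $q$; alternatively one globalizes by comparing to the pushforward $f_\ast \mathbf{1}_X$ in an appropriate $\SH(k)$-level formalism rather than working chart by chart.
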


\section{The $\A^1$-degree of the Wronski}\label{sec:degree}

Our strategy for computing the $\A^1$-degree of the Wronski is to exhibit a section $\sigma$ of a particular vector bundle $\V \to \Gr_k(m,m+p)$, and on a suitable open chart, equate $\sigma$ with the Wronski up to another morphism of constant $\A^1$-degree. In this way, we will be able to equate the local index of the section $\sigma$ with a constant multiple of the local $\A^1$-degree of the Wronski in all possible parities (\autoref{lem:local-index-is-a1-local-degree-wronski}). In the case when $m$ and $p$ are even, the bundle $\mathcal{V}$ will be relatively orientable, and its Euler class will therefore be an integer multiple of the hyperbolic form in $\GW(k)$ (\autoref{lem:Levine-euler-class}), as will the global $\A^1$-degree of the Wronski. Deferring to the classical computation of Schubert, as we know the rank of this form over $\mathbb{C}$, we are able to provide the global $\A^1$-degree of the Wronski in \autoref{thm:deg-wronski-both-even}.

\subsection{Nisnevich coordinates on the Grassmannian, distinguished bases}

We will begin by establishing the existence of Nisnevich coordinates on an arbitrary Grassmannian. Let $W \in \Gr_k(m,m+p)$ be an arbitrary point, and pick a basis $e_1, \ldots, e_{m+p}$ of $k_{m+p-1}[t]$ so that
\begin{align*}
    W = \spn \left\{ e_{p+1}, \ldots, e_{m+p} \right\}.
\end{align*}

\begin{definition} We define a \textit{moving basis around} $W$, denoted by $\left\{ \widetilde{e}_1, \ldots, \widetilde{e}_{m+p} \right\}$, to be a basis of $k_{m+p-1}[t]$, parametrized by $\A^{mp}_k = \Spec[ x_{i,j}]_{1\le i\le m,\ 1\le j\le p}$:
\begin{equation}\label{eqn:moving-basis}
\begin{aligned}
    \til{e}_i &=  \begin{cases} e_i & 1\le i\le p \\ e_i + \sum_{j=1}^p x_{i-p,j}e_j & p+1 \le i\le m+p. \end{cases}
\end{aligned}
\end{equation}
\end{definition}

Consider the morphism
\begin{align*}
    \A^{mp}_k = \Spec[ x_{i,j}]_{1\le i\le m,\ 1\le j\le p} &\to \Gr_k(m,m+p) \\
    (x_{i,j})_{i,j} &\mapsto \spn \left\{ \widetilde{e}_{p+1}, \ldots, \widetilde{e}_{m+p} \right\}.
\end{align*}

Another way to phrase the image of this map is that $(x_{i,j})$ is sent to
\begin{equation}\label{eqn:local-coords}
\begin{aligned}
    \text{RowSpace}  \begin{pmatrix} x_{1,1} & \cdots & x_{1,p} & 1 & 0 & \cdots & 0 & \\
    x_{2,1} & \cdots & x_{2,p} & 0 & 1 & \cdots & 0 \\
    \vdots & \ddots & \vdots  & \vdots &\vdots &\ddots &\vdots \\
    x_{m,1} & \cdots & x_{m,p} & 0 & 0 & \cdots & 1
    \end{pmatrix}.
\end{aligned}
\end{equation}
where the columns correspond to the basis elements $e_1, \ldots, e_{m+p}$.
This define a Zariski open immersion by the content of the argument in \cite[Lemma~40]{KW-arithm}. Letting $U$ denote its image, we obtain Nisnevich coordinates centered around $W$ by \autoref{ex:Zar-open-immersion-Nis-coords}.

\begin{remark}\label{rmk:labelname} We can provide a more classical description of the Nisnevich coordinates above. We note that the $m$-plane $W$ lives inside the ambient vector space, so we have a short exact sequence
\begin{align*}
    W \hookto k_{m+p-1}[t] \to k_{m+p-1}[t]/W.
\end{align*}
Picking a splitting for this is equivalent to picking a complementary $p$-plane to $W$. We remark that, due to the construction of the tangent space to the Grassmannian at $W$, we have an isomorphism
\begin{align*}
    T_W \Gr_k(m,k_{m+p-1}[t]) \cong \Hom(W,k_{m+p-1}[t]/W).
\end{align*}
By fixing such a complementary plane (i.e. choosing a basis $e_1, \ldots, e_p$), we can identify $\Hom(W,k_{m+p-1}[t]/W)$ with $\A^{mp}_k$, by sending a homomorphism to its graph. This graph is precisely given by the matrix in \autoref{eqn:local-coords}. In this sense, our open cell $U$ is the subspace of $m$-planes in the Grassmannian which only intersect the $p$-plane $\spn \left\{ e_1, \ldots, e_p \right\}$ trivially (c.f. \cite[\S3.2.2]{3264}).
\end{remark}

\begin{remark}\label{rmk:xij-from-pluck-coords} We remark that the $x_{i,j}$'s can be recovered as particular \textit{Pl\"ucker coordinates}. Let $W$ denote the $m$-plane corresponding to $(x_{i,j})$. Consider the $k \times k$ minor of columns $(j, p+1, \ldots,\widehat{p+i}, \ldots, p+m)$. Expanding along the first column, we see that everything vanishes until we hit the $i$th row, at which point the determinant yields $(-1)^i x_{i,j}$. That is,
\begin{align*}
    x_{i,j} &= (-1)^i p_{(j,p+1, \widehat{p+i}, \ldots, p+m)}(W). 
\end{align*}
Here the Pl\"{u}cker coordinates are taken with respect to the basis $\left\{ e_1, \ldots, e_{m+p} \right\}$. For concision we will introduce new notation to correspond to this multiindex:
\begin{equation}\label{eqn:alpha-multiindex}
\begin{aligned}
    \alpha(i,j) := (j,p+1, \ldots, \widehat{p+i}, \ldots, p+m).
\end{aligned}
\end{equation}
\end{remark}

Nisnevich coordinates defined by a moving basis induce distinguished basis elements on the tangent space $\left. T\Gr_k(m,m+p) \right|_{ U }$. In order to describe these distinguished basis elements, we first must discuss the structure of the tangent space of the Grassmannian.

\begin{definition} The \textit{tautological bundle} on the Grassmannian, denoted $\mathcal{S} \to \Gr_k(m,m+p)$, is the $m$-plane bundle whose fiber over the point $W$ is the vector space $W$ itself.
\end{definition}
The line bundle $\O(1)$ on the Grassmannian, defined to be the pullback of $\O(1)$ under the Pl\"{u}cker embedding, is precisely $\det \mathcal{S} = \wedge^m \S$. Including the tautological bundle into the trivial rank $mp$ bundle, we obtain the \textit{quotient bundle}, defined as the cokernel
\begin{align*}
    0 \to \mathcal{S} \to \A^{mp}_k \tto \mathcal{Q} \to 0.
\end{align*}
We can therefore express the tangent bundle of the Grassmannian by
\begin{equation}\label{eqn:tangent-space-Gr-hom-bundle}
\begin{aligned}
    T\Gr_k(m,m+p) \cong \HOM(\mathcal{S},\mathcal{Q}) = \mathcal{S}^\ast \otimes \mathcal{Q}.
\end{aligned}
\end{equation}

\begin{proposition}\label{prop:distinguished-bases-Gr} Given Nisnevich coordinates $U \to \A^{mp}_k$ corresponding to a moving basis $\widetilde{e}_1, \ldots, \widetilde{e}_{m+p}$, then one has the following distinguished bases over $U$:
\begin{enumerate}
    \item $\left\{ \widetilde{e}_{p+1}, \ldots, \widetilde{e}_{p+m} \right\}$ is a distinguished basis for the tautological bundle $\left.\mathcal{S}\right|_U$
    \item Letting $\widetilde{\phi}_i$ denote the cobasis element to $\widetilde{e}_i$, we see that $\left\{ \widetilde{\phi}_{p+1}, \ldots, \widetilde{\phi}_{m+p}  \right\}$ provides a distinguished basis for the dual tautological bundle $\left. \mathcal{S}^\ast \right|_{ U }$.
    \item $\left\{\widetilde{e}_1, \ldots, \widetilde{e}_p\right\}$ provides a distinguished basis for the quotient bundle $\left. \mathcal{Q} \right|_{ U }$.
    \item The tensor products of vectors
    \begin{align*}
        \left\{ \widetilde{\phi}_j \otimes \widetilde{e}_i \ : \ 1\le i\le p \text{ and } p+1\le j\le m+p \right\}
    \end{align*}
    provide a distinguished basis for the tangent bundle $\left.T\Gr_k(m,m+p)\right|_U$.
\end{enumerate}
\end{proposition}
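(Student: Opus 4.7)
The plan is to verify the four claims in order, leveraging the explicit form of the moving basis from \autoref{eqn:moving-basis} and the canonical isomorphism $T\Gr_k(m,m+p) \cong \mathcal{S}^* \otimes \mathcal{Q}$ recorded in \autoref{eqn:tangent-space-Gr-hom-bundle}. Parts (1)--(3) are essentially formal unpackings of the construction of the Nisnevich chart and serve to fix the ``distinguished'' trivializations of $\left.\mathcal{S}\right|_U$, $\left.\mathcal{S}^*\right|_U$, and $\left.\mathcal{Q}\right|_U$; the only genuine content lies in part (4), since the distinguished trivialization of $\left.T\Gr_k(m,m+p)\right|_U$ is forced by the Nisnevich coordinates via \autoref{def:Nis-coords-trivialization-tangent-space}.

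For (1), \autoref{eqn:local-coords} shows that the fiber of $\mathcal{S}$ at the point parametrized by $(x_{i,j})$ is precisely $\spn\{\widetilde{e}_{p+1}, \ldots, \widetilde{e}_{p+m}\}$, and these are algebraic sections defined on all of $U$, so they give a trivialization. Part (2) follows by taking the pointwise dual basis. For (3), since $\{\widetilde{e}_1, \ldots, \widetilde{e}_{m+p}\}$ is a basis of the constant bundle with fiber $k_{m+p-1}[t]$, and $\left.\mathcal{S}\right|_U$ is trivialized by the last $m$ of these, the images of $\widetilde{e}_1, \ldots, \widetilde{e}_p$ descend to a basis of the quotient $\left.\mathcal{Q}\right|_U$.

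The main step is (4): one must identify the pullback of $\frac{\partial}{\partial x_{i,j}}$ along the \'etale chart $\phi$ with the tensor $\widetilde{\phi}_{p+i} \otimes \widetilde{e}_j$ under $\mathcal{S}^* \otimes \mathcal{Q} \cong \HOM(\mathcal{S},\mathcal{Q})$. I would invoke the classical identification $T_W \Gr_k(m,m+p) \cong \Hom(W, k_{m+p-1}[t]/W)$ from \autoref{rmk:labelname}, which sends the velocity of a smooth curve of $m$-planes through $W$ to the homomorphism encoding its first-order motion. Applying this to the linear path $t \mapsto \phi^{-1}(\phi(W) + t\cdot e_{i,j})$, a direct differentiation of the moving basis yields
\begin{align*}
\frac{\partial \widetilde{e}_{p+k}}{\partial x_{i,j}} = \delta_{i,k}\, e_j = \delta_{i,k}\, \widetilde{e}_j,
\end{align*}
where the last equality uses $1\le j\le p$. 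Hence $\frac{\partial}{\partial x_{i,j}}$ corresponds to the homomorphism sending $\widetilde{e}_{p+i} \mapsto [\widetilde{e}_j]$ and killing $\widetilde{e}_{p+k}$ for $k\ne i$, which is exactly $\widetilde{\phi}_{p+i}\otimes\widetilde{e}_j$ acting as an element of $\HOM(\mathcal{S},\mathcal{Q})$.

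The one delicate point is pinning down the identification $T_W\Gr_k(m,m+p) \cong \Hom(W, k_{m+p-1}[t]/W)$ in a way that is compatible with the graph-of-homomorphism description of the affine chart $\phi$ indicated in \autoref{rmk:labelname}. Once this bookkeeping is made precise (either through the functor of points or by directly computing tangent vectors to paths of row spaces as in the reference there), the computation above finishes the proof without additional subtlety.
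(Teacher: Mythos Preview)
Your argument is correct. The paper actually states \autoref{prop:distinguished-bases-Gr} without proof, treating it as a standard fact about the affine charts on the Grassmannian; your write-up supplies exactly the routine verification the paper omits. Parts (1)--(3) are indeed just unpackings of the chart in \autoref{eqn:local-coords}, and your computation for (4)---differentiating the moving basis to see that $\partial/\partial x_{i,j}$ corresponds to the rank-one map $\widetilde{e}_{p+i}\mapsto[\widetilde{e}_j]$, i.e.\ to $\widetilde{\phi}_{p+i}\otimes\widetilde{e}_j$---is the standard way to match the coordinate trivialization of $T\Gr_k(m,m+p)$ with the $\mathcal{S}^\ast\otimes\mathcal{Q}$ description, in line with the discussion in \autoref{rmk:labelname}. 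The only caveat you flag (compatibility of the $\Hom(W,k_{m+p-1}[t]/W)$ identification with the graph-of-homomorphism chart) is genuinely just bookkeeping and poses no obstruction.
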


\begin{lemma}\label{lem:TGr-orientable-even-even} If $m$ and $p$ are both even, there is a global orientation of the tangent bundle $T\Gr_k(m,m+p)$ which is compatible with any Nisnevich coordinates defined by moving bases.
\end{lemma}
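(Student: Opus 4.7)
The plan is to exhibit an explicit square root of $\det T\Gr_k(m,m+p)$ that transforms compatibly with moving-basis trivializations. By~\autoref{eqn:tangent-space-Gr-hom-bundle}, $T\Gr \cong \mathcal{S}^\ast \otimes \mathcal{Q}$, and the canonical determinant-of-tensor identity yields
\begin{align*}
\det T\Gr \cong (\det \mathcal{S}^\ast)^{\otimes p} \otimes (\det \mathcal{Q})^{\otimes m}.
\end{align*}
Writing $m = 2m'$ and $p = 2p'$, I set $L := (\det \mathcal{S}^\ast)^{\otimes p'} \otimes (\det \mathcal{Q})^{\otimes m'}$, so that $L^{\otimes 2} \cong \det T\Gr$ via the evident isomorphism $j$; this is the candidate orientation.

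To check compatibility on a single chart, fix a moving basis around $W$ and abbreviate $\widetilde{\Phi} := \widetilde{\phi}_{p+1} \wedge \cdots \wedge \widetilde{\phi}_{p+m}$ and $\widetilde{E} := \widetilde{e}_1 \wedge \cdots \wedge \widetilde{e}_p$ for the distinguished trivializations of $\det \mathcal{S}^\ast|_U$ and $\det \mathcal{Q}|_U$ coming from~\autoref{prop:distinguished-bases-Gr}. The distinguished trivialization of $\det T\Gr|_U$ is the wedge $\sigma_U := \bigwedge_{(i,j)} \widetilde{\phi}_j \otimes \widetilde{e}_i$ in a fixed ordering of pairs $(i,j)$, and a direct expansion identifies this with $\epsilon \cdot \widetilde{\Phi}^{\otimes p} \otimes \widetilde{E}^{\otimes m}$ for a sign $\epsilon = \pm 1$ fixed by the ordering convention. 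The distinguished trivialization of $L^{\otimes 2}|_U$ is exactly $\widetilde{\Phi}^{\otimes p} \otimes \widetilde{E}^{\otimes m}$, so rescaling $j$ by $\epsilon$ (a choice of isomorphism, not a new bundle) makes $\sigma_U$ correspond to a square.

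Finally, I verify universality of this sign across choices of moving basis. Under a change of moving basis, the sections $\widetilde{\Phi}$ and $\widetilde{E}$ scale by $\det A$ and $\det B$ respectively, where $A \in \GL_m$ and $B \in \GL_p$ are the induced transition matrices on $\mathcal{S}^\ast|_U$ and $\mathcal{Q}|_U$. The section $\sigma_U$ then scales by $\det(A \otimes B) = (\det A)^p (\det B)^m$, while the distinguished section of $L^{\otimes 2}|_U$ scales by $\left((\det A)^{p'}(\det B)^{m'}\right)^{\otimes 2} = (\det A)^p (\det B)^m$. The two laws agree, so the sign $\epsilon$ is the same on every chart coming from a moving basis, and a single rescaling of $j$ yields an orientation compatible with every such chart simultaneously.

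The main obstacle will be the clean matching of transformation laws above; this is the step that genuinely uses both $m$ and $p$ even, since otherwise $p'$ or $m'$ would fail to be an integer and one would be forced to invoke the auxiliary isomorphism $\det \mathcal{S}^\ast \cong \det \mathcal{Q}$ coming from $\det \mathcal{S} \otimes \det \mathcal{Q} \cong \O$, which introduces square-class ambiguities tied to the trivialization data and breaks compatibility with arbitrary moving bases.
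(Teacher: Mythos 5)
Your argument is correct, and its decisive step coincides with the paper's entire proof: under a change of moving basis the distinguished trivialization of $\left. T\Gr_k(m,m+p)\right|_{U\cap U'} \cong \left.\mathcal{S}^\ast \otimes \mathcal{Q}\right|_{U\cap U'}$ changes by $A\otimes B$, whose determinant $(\det A)^p(\det B)^m$ is a square of a unit when $m$ and $p$ are both even; the paper (generalizing Lemma~8 of \cite{SW}) performs exactly this transition-function check and stops there. What you do differently is to exhibit the orientation explicitly: the square-root line bundle $L=(\det\mathcal{S}^\ast)^{\otimes p/2}\otimes(\det\mathcal{Q})^{\otimes m/2}$ with $L^{\otimes 2}\cong \det T\Gr_k(m,m+p)$ via $\det(\mathcal{S}^\ast\otimes\mathcal{Q})\cong(\det\mathcal{S}^\ast)^{\otimes p}\otimes(\det\mathcal{Q})^{\otimes m}$, together with the bookkeeping that the wedge of the distinguished tangent basis differs from the evident square by a universal sign $\epsilon$, absorbed once and for all into $j$ because your transformation-law comparison shows $\epsilon$ is chart-independent. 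This buys a genuinely constructive statement: the paper's cocycle check shows the required square structure exists (the gluing of the local data into a global $\mathscr{L}$ is left implicit), whereas you hand over the global square root and verify compatibility chart by chart, which also makes clear why the conclusion holds for \emph{all} moving-basis charts simultaneously rather than for one at a time. Your closing aside about odd parities is only heuristic and not needed for the lemma (note that on the Grassmannian $\det\mathcal{Q}\cong\det\mathcal{S}^\ast$, so $\det T\Gr_k(m,m+p)\cong \O(m+p)$ as used in \autoref{prop:V-orientable}; the genuine obstruction in mixed or odd-odd parity is compatibility with moving bases, not existence of some square root when $m+p$ is even), but nothing in your proof depends on that remark.
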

\begin{proof} This is a direct generalization of \cite[Lemma~8]{SW}. Let $(e_1,\ldots,e_{m+p})$ and $(e_1',\ldots,e_{m+p}')$ denote two bases of $k_{m+p-1}[t]$, and let $\{\til{e}_i\}$, $\{\til{e}_i'\}$ denote the associated moving bases parametrizing open cells $U$ and $U'$ of the Grassmannian. If $U\cap U'\neq \emptyset$, we have that
\begin{align}\label{eqn:overlap-equal-basis-spn}
	\spn\{\til{e}_{p+1},\ldots,\til{e}_{m+p}\} = \spn\{\til{e}_{p+1}',\ldots,\til{e}_{m+p}'\} \quad \quad \text{ on } U\cap U' .
\end{align}

Letting $\til{\phi}_i$ and $\til{\phi}_i'$ denote the dual basis elements, respectively, we obtain canonical trivializations for $\left. T\Gr_k(m,m+p) \right|_{U\cap U'}$, given by:
\begin{align*}
	&\{\til{\phi}_j \otimes \til{e}_i \ : \ 1\leq i \leq p,\ p+1\leq j\leq m+p\}, \\
	&\{\til{\phi}_j' \otimes \til{e}_i' \ : \ 1\leq i \leq p,\ p+1\leq j\leq m+p\}.
\end{align*}
Denote by $B$ the change of basis matrix from $\left\{ \widetilde{e}_1, \ldots, \widetilde{e}_p \right\}$ and $\left\{ \widetilde{e}_1', \ldots, \widetilde{e}_p' \right\}$ on the quotient bundle $\left.\mathcal{Q}\right|_{U\cap U'}$, and denote by $A$ the  change of basis matrix on $\left.\mathcal{S}^\ast \right|_{U\cap U'}$ from the basis $\left\{ \widetilde{\phi}_{p+1}, \ldots, \widetilde{\phi}_{m+p} \right\}$ to $\left\{ \widetilde{\phi}_{p+1}', \ldots, \widetilde{\phi}_{m+p}' \right\}$. Then the change of basis matrix on $\left. T\Gr_k(m,m+p) \right|_{U\cap U'}$ is given by $A \otimes B$. The determinant of this matrix is $\det(A)^m \det(B)^p$. As $m$ and $p$ are both even, this is a square in $\O(U\cap U')^\times$.
\end{proof}

\subsection{Relative orientations of bundles over the Grassmannian; even-even parity}

We will discuss a bundle $\mathcal{V}$ over the Grassmannian, which is relatively orientable in the case where $m$ and $p$ are both even. We will additionally construct a section $\sigma : \Gr_k(m,m+p) \to \mathcal{V}$, whose local degree at any simple zero is related to the local $\A^1$-degree of the Wronski in any parities.

\begin{notation} We denote by $\mathcal{V}$ the $mp$-dimensional line bundle
\begin{equation}\label{eqn:definition-bundle-V}
\begin{aligned}
     \mathcal{V} = \bigoplus_{i=1}^{mp} \bigwedge^m \mathcal{S}^\ast \to \Gr_k(m,m+p).
\end{aligned}
\end{equation}
\end{notation}

\begin{proposition}\label{prop:V-orientable} The vector bundle $\V \to \Gr_k(m,m+p)$ is relatively orientable if and only if $m$ and $p$ are both even.
\end{proposition}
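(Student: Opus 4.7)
The plan is to reduce the question to a computation in the Picard group of the Grassmannian, which is $\mathbb{Z}$ generated by $L := \det\mathcal{S}$ (equivalently, by the pullback of $\mathcal{O}(1)$ under the Plücker embedding). Since squares of line bundles correspond to even multiples of $L$, relative orientability of $\mathcal{V}$ is equivalent to the class $[\det\mathcal{V}] - [\det T\Gr_k(m,m+p)] \in \mathrm{Pic}(\Gr_k(m,m+p)) \cong \mathbb{Z}$ being even.

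First I would compute $\det\mathcal{V}$. Since $\wedge^m \mathcal{S}^* = (\det\mathcal{S})^{-1} = L^{-1}$ is itself a line bundle, taking the determinant of the direct sum of $mp$ copies gives
\begin{align*}
    \det\mathcal{V} = \bigotimes_{i=1}^{mp}\wedge^m\mathcal{S}^* = L^{-mp}.
\end{align*}
Next I would compute $\det T\Gr_k(m,m+p)$ using the identification $T\Gr \cong \mathcal{S}^*\otimes\mathcal{Q}$ from \autoref{eqn:tangent-space-Gr-hom-bundle}. Applying the standard formula $\det(E\otimes F) = (\det E)^{\mathrm{rk}(F)}\otimes (\det F)^{\mathrm{rk}(E)}$ for a tensor product of bundles of ranks $m$ and $p$, I get
\begin{align*}
    \det T\Gr_k(m,m+p) = (\det\mathcal{S}^*)^{\otimes p}\otimes (\det\mathcal{Q})^{\otimes m}.
\end{align*}
The short exact sequence $0\to\mathcal{S}\to\mathcal{O}^{m+p}\to\mathcal{Q}\to 0$ forces $\det\mathcal{S}\otimes\det\mathcal{Q}$ to be trivial, so $\det\mathcal{Q} = L^{-1}$, giving $\det T\Gr = L^{-p}\otimes L^{-m} = L^{-(m+p)}$.

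Combining these,
\begin{align*}
    \HOM(\det T\Gr, \det\mathcal{V}) = L^{m+p}\otimes L^{-mp} = L^{m+p-mp}.
\end{align*}
This is a square in $\mathrm{Pic}(\Gr_k(m,m+p))$ if and only if the integer $m+p-mp$ is even. A quick parity check shows that this happens precisely when $m$ and $p$ are both even: if either is odd then $mp$ takes the opposite parity from $m+p$, making $m+p-mp$ odd. The main obstacle is justifying the ``only if'' direction, which requires that odd powers of $L$ genuinely fail to be squares; this follows from $\mathrm{Pic}(\Gr_k(m,m+p))=\mathbb{Z}\cdot [L]$, a standard fact about Grassmannians that I would cite. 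Conversely, when $m$ and $p$ are both even, any line bundle $\mathscr{L}$ with $\mathscr{L}^{\otimes 2}\cong L^{m+p-mp}$ (e.g.\ $\mathscr{L}=L^{(m+p-mp)/2}$) supplies an explicit relative orientation, completing the proof.
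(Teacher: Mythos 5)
Your proposal is correct and follows essentially the same route as the paper: both identify the class of $\HOM(\det T\Gr_k(m,m+p),\det\mathcal{V})$ in the Picard group (the paper writes it as $\O(mp-m-p)$, you as $L^{m+p-mp}$, the same class up to the dual sign convention) and conclude by checking the parity of the exponent. Your write-up is somewhat more explicit than the paper's, spelling out $\det T\Gr \cong L^{-(m+p)}$ via $T\Gr\cong\mathcal{S}^\ast\otimes\mathcal{Q}$ and noting that the ``only if'' direction rests on $\mathrm{Pic}(\Gr_k(m,m+p))\cong\Z$, which the paper leaves implicit.
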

\begin{proof} For our bundle $\V$, we compute that
\begin{align*}
	\HOM(\det T\Gr(m,m+p), \det \V) &\cong \HOM(\O(m+p), \prod_{i=1}^{mp} \det(\mathcal{E})) \cong \HOM(\O(m+p),\O(mp)) \\
	&\cong \O(-m-p)\otimes \O(mp) \cong \O(mp - m - p).
\end{align*}
We note that $\O(mp - m - p)$ is a square of a line bundle if and only if $mp - m - p \equiv 0\pmod{2}$, that is, $m$ and $p$ are both even.
\end{proof}

\begin{proposition} If $m$ and $p$ are both even, then there is a relative orientation of the vector bundle $\mathcal{V} \to \Gr_k(m,m+p)$ which is compatible with Nisnevich coordinates defined by moving bases.
\end{proposition}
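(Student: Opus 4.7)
The plan is to construct the global relative orientation by identifying $\HOM(\det T\Gr,\det\mathcal{V})$ canonically as a tensor power of $\det\mathcal{S}^\ast$, and then to verify Nisnevich compatibility by a single scalar computation on a moving-basis chart. Starting from $T\Gr_k(m,m+p)\cong\mathcal{S}^\ast\otimes\mathcal{Q}$ of \autoref{eqn:tangent-space-Gr-hom-bundle} together with the identification $\det\mathcal{Q}\cong\det\mathcal{S}^\ast$ coming from the trivialization $e_1\wedge\cdots\wedge e_{m+p}$ of $\wedge^{m+p}k_{m+p-1}[t]$ applied to the defining sequence $0\to\mathcal{S}\to\O^{m+p}\to\mathcal{Q}\to 0$, I obtain canonical isomorphisms $\det T\Gr\cong(\det\mathcal{S}^\ast)^{\otimes(m+p)}$ and $\det\mathcal{V}\cong(\det\mathcal{S}^\ast)^{\otimes mp}$, so that
\[
\HOM(\det T\Gr,\det\mathcal{V})\cong(\det\mathcal{S}^\ast)^{\otimes(mp-m-p)}.
\]
With $m,p$ both even, $mp-m-p=2k$ is even, and I set $\mathscr{L}:=(\det\mathcal{S}^\ast)^{\otimes k}$, taking $j:\mathscr{L}^{\otimes 2}\xto{\sim}\HOM(\det T\Gr,\det\mathcal{V})$ to be the resulting canonical isomorphism, which exists by \autoref{prop:V-orientable}.

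To verify compatibility on a moving-basis chart $U\cong\A^{mp}_k$ with moving basis $\{\til e_i\}$, I would fix the distinguished trivialization $\ell_U:=\til\phi_{p+1}\wedge\cdots\wedge\til\phi_{p+m}$ of $\det\mathcal{S}^\ast|_U$ supplied by \autoref{prop:distinguished-bases-Gr}, so that $\ell_U^{\otimes k}$ trivializes $\mathscr{L}|_U$ and $j(\ell_U^{\otimes k}\otimes\ell_U^{\otimes k})$ trivializes $\HOM(\det T\Gr|_U,\det\mathcal{V}|_U)$. The distinguished Nisnevich section $s_U$ of that hom bundle sends the ordered wedge of $\{\til\phi_j\otimes\til e_i:1\le i\le p,\,p+1\le j\le m+p\}$ to the $mp$-fold tensor power of $\til\phi_{p+1}\wedge\cdots\wedge\til\phi_{p+m}$. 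Since $U$ is affine with $\O(U)^\times=k^\times$, the ratio $c:=s_U/j(\ell_U^{\otimes 2k})$ is a single scalar in $k^\times$, which I evaluate at the origin of $U$, where the moving basis degenerates to the static basis $\{e_i\}$ and the ratio becomes pure linear algebra in a fixed basis of $k_{m+p-1}[t]$.

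The main obstacle will be the careful tracking of signs through the two identifications $\det(\mathcal{S}^\ast\otimes\mathcal{Q})\cong(\det\mathcal{S}^\ast)^{\otimes p}\otimes(\det\mathcal{Q})^{\otimes m}$ and $\det\mathcal{Q}\cong\det\mathcal{S}^\ast$, together with the ordering convention on the basis $\{\til\phi_j\otimes\til e_i\}$ of $T\Gr|_U$. The resulting $c$ is an explicit power of $-1$ whose exponent is a polynomial in $m$ and $p$; the hypotheses $m\equiv p\equiv 0\pmod 2$ are precisely what force each such exponent to be even, so that $c$ is a square in $k^\times$. Any residual overall sign can be absorbed into the orientation by replacing $j$ with $-j$, which remains a valid isomorphism. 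This argument is parallel in spirit to the overlap-based proof of \autoref{lem:TGr-orientable-even-even}, with the same parity mechanism ensuring that transition data on $\det\mathcal{S}^\ast$ contributes only through even powers.
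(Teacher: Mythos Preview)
Your approach is sound but takes a genuinely different route from the paper. The paper argues directly with transition functions: given two moving-basis charts $U,U'$ with nonempty overlap, it computes the change of basis on $\HOM(\det T\Gr,\det\mathcal{V})|_{U\cap U'}$ as $\det(A)^{-m}\det(B)^{-p}\det(A)^{mp}$, where $A,B$ are the change-of-basis matrices on $\mathcal{S}^\ast$ and $\mathcal{Q}$ respectively, and observes this is a square once $m,p$ are even. That is the entire proof---no global isomorphism $j$ is written down explicitly, and no sign-tracking through $\det(\mathcal{S}^\ast\otimes\mathcal{Q})$ is needed. Your approach instead constructs $j$ globally by identifying $\HOM(\det T\Gr,\det\mathcal{V})$ with $(\det\mathcal{S}^\ast)^{\otimes(mp-m-p)}$ and then verifies compatibility chart by chart; this is more conceptual and names $\mathscr{L}$ explicitly, at the cost of more bookkeeping.

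There is one point where your argument is loose. The identification $\det\mathcal{Q}\cong\det\mathcal{S}^\ast$ requires a trivialization of $\wedge^{m+p}k_{m+p-1}[t]$, and you invoke $e_1\wedge\cdots\wedge e_{m+p}$ for this; but in your write-up $\{e_i\}$ is the chart-specific basis, so as stated $j$ itself depends on the chart and is not a global object. If instead you fix one global basis to build $j$ and then test compatibility on a chart $U$ built from a different basis $\{e_i'\}$, the scalar $c_U$ is no longer a pure power of $-1$: it acquires an extra factor $\delta^{m}$ where $\delta=(e_1'\wedge\cdots\wedge e_{m+p}')/(e_1\wedge\cdots\wedge e_{m+p})\in k^\times$. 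This is still a square precisely because $m$ is even, so the argument closes, but that step has to be made explicit for the proof to cover all moving-basis charts rather than just one. The paper's cocycle computation sidesteps this issue entirely.
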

\begin{proof} Take two cells $U$ and $U'$ on $\Gr_k(m,m+p)$ with non-empty intersection, parametrized respectively by the moving bases $\widetilde{e}_i$ and $\widetilde{e}_i'$, and assume as before that
\begin{align*}
    \spn \left\{ \widetilde{e}_{p+1}, \ldots, \widetilde{e}_{m+p} \right\} = \spn \left\{ \widetilde{e}_{p+1}', \ldots, \widetilde{e}_{m+p}' \right\} \text{ on } U \cap U'.
\end{align*}
The trivializations $\left\{ \widetilde{\phi}_{p+1}, \ldots, \widetilde{\phi}_{m+p} \right\}$ and $\left\{ \widetilde{\phi}_{p+1}', \ldots, \widetilde{\phi}_{m+p}' \right\}$ on the dual tautological bundle $\left. \mathcal{S}^\ast \right|_{ U\cap U' }$ induce associated trivializations $\widetilde{\phi}_{p+1}\wedge \cdots \wedge \widetilde{\phi}_{m+p}$ and $\widetilde{\phi}_{p+1}'\wedge \cdots \wedge \widetilde{\phi}_{m+p}'$, respectively, for $\left.\wedge^m \mathcal{S}^\ast \right|_{U\cap U'}$. If $A$ denotes the change of basis matrix on $\left. \mathcal{S}^\ast \right|_{ U \cap U' }$ as above, then $\det(A)$ denotes the change of basis on $\left.\wedge^m \mathcal{S}^\ast \right|_{U\cap U'}$. Since $\mathcal{V} = \oplus_{i=1}^{mp} \wedge^m \mathcal{S}^\ast$, we have that the change of basis on $\left. \mathcal{V} \right|_{ U\cap U' }$ is given by a block sum of $mp$ copies of $\det(A)$. Thus the change of basis matrix in $\HOM( \det T \Gr_k(m,m+p), \det \mathcal{V}) \cong (\det T\Gr_k(m,m+p))^\ast \otimes \det \mathcal{V}$ over $U \cap U'$ is given by 
\begin{align*}
    \det \left( A \otimes B \right)^{-1} \otimes \det \left( \bigoplus_{i=1}^{mp} \det(A)\right) &= \det(A)^{-m} \det(B)^{-p} \det(A)^{mp}.
\end{align*}
As $m$ and $p$ are both even, this is a square.
\end{proof}

\subsection{Interpreting the Wronski as a section of a line bundle}\label{sec:wronski-as-section} We now construct a section $\sigma$ of the bundle $\mathcal{V}$ which is intimately related to the Wronski. For this section we fix $s_1, \ldots, s_{mp}$ to be distinct scalars in $k$ --- the reader is invited to think of these scalars as timestamps on $\A^1_k$, yielding positions on the rational normal curve at each time, as well as osculating planes.

Recall from \autoref{prop:gamma-covector} the covector $\left( \gamma^{(j)}(s) \right)^\ast$, which mapped a polynomial $f$ to $f^{(j)}(s)$. We would like to consider these covectors as $j$ ranges from $0$ to $m-1$, and as $s$ varies over our set of scalars $\left\{ s_1, \ldots, s_{mp} \right\}$. To that end, it will be beneficial to introduce some more compact notation.

\begin{notation} We denote by $\sigma_{i,j}$ the covector $\left( \gamma^{(j-1)}(s_i) \right)^\ast$, given by
\begin{align*}
    \sigma_{i,j} : k_{m+p-1}[t] &\to k \\
    f &\mapsto f^{(j-1)}(s_i).
\end{align*}
Note the indexing on $\sigma_{i,j}$: the index $i$ is running from $1$ to $mp$, keeping track of the time on the rational normal curve, while $j$ is running from $1$ to $m$, indicating the extent to which the input is being differentiated.
\end{notation}

\begin{remark} For a fixed $i$, the covectors $\left\{ \sigma_{i,1}, \sigma_{i,2}, \ldots, \sigma_{i,m} \right\}$ cut out a $p$-plane under Grassmannian duality. This plane is precisely $E_p(s_i)$, as defined in \autoref{rmk:flags-in-duality}.
\end{remark}

\begin{notation} We denote by $\sigma_i$ the wedge of covectors $\sigma_{i,1} \wedge \cdots \wedge \sigma_{i,m}$. This is a section of $\O(1)$ over the Grassmannian, that is, $\sigma_i : \Gr_k(m,m+p) \to \wedge^m \mathcal{S}^\ast$. Letting $i$ vary from $1$ to $mp$, we obtain $mp$ sections of $\wedge^m \mathcal{S}^\ast$, that is, a section of our bundle $\mathcal{V}$. We denote by $\sigma$ this section:
\begin{align*}
    \sigma := \bigoplus_{i=1}^{mp} \sigma_i = \bigoplus_{i=1}^{mp} \left( \wedge_{j=1}^m \sigma_{i,j} \right) : \Gr_k(m,m+p) \to \mathcal{V} = \bigoplus_{i=1}^{mp} \wedge^m \mathcal{S}^\ast.
\end{align*}
We may also write $\sigma = \sigma(s_1, \ldots, s_{mp})$ if we wish to indicate dependence of $\sigma$ on the initial choice of scalars $s_i$.
\end{notation}

\begin{proposition}\label{prop:zeros-sigma-Wr} We see that $W = \spn\left\{ f_1, \ldots, f_m \right\}$ is a zero of $\sigma_i$ if and only if the Wronski polynomial $\Wr(f_1, \ldots, f_m)(t)$ vanishes at $s_i$.
\end{proposition}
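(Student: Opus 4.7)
The plan is to unwind the definition of $\sigma_i$ fiberwise and reduce the vanishing statement to the determinantal formula for the Wronski already established in \autoref{prop:Wronski-as-det}.

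First, I would recall that the fiber of $\wedge^m \mathcal{S}^\ast$ over a point $W \in \Gr_k(m,m+p)$ is the one-dimensional space $\wedge^m W^\ast$. The value $\sigma_i(W)$ is obtained by restricting each covector $\sigma_{i,j}: k_{m+p-1}[t] \to k$ to $W \subseteq k_{m+p-1}[t]$ and then taking the wedge:
\begin{align*}
    \sigma_i(W) = \sigma_{i,1}|_W \wedge \sigma_{i,2}|_W \wedge \cdots \wedge \sigma_{i,m}|_W \in \wedge^m W^\ast.
\end{align*}
Since $\wedge^m W^\ast$ is a line, this element vanishes if and only if the restricted covectors $\sigma_{i,j}|_W$ (for $j = 1, \ldots, m$) are linearly dependent in $W^\ast$.

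Next, I would detect linear dependence via a determinant. Picking the basis $f_1, \ldots, f_m$ of $W$, the restrictions $\sigma_{i,j}|_W$ are linearly dependent if and only if the $m \times m$ matrix
\begin{align*}
    \bigl( \sigma_{i,j}(f_\ell) \bigr)_{1 \le j,\ell \le m}
\end{align*}
is singular. By definition $\sigma_{i,j}(f_\ell) = f_\ell^{(j-1)}(s_i)$, so this matrix is precisely the Wronski matrix of $f_1, \ldots, f_m$ evaluated at $t = s_i$, whose determinant is $\Wr(f_1, \ldots, f_m)(s_i)$. (Alternatively, one can invoke \autoref{prop:Wronski-as-det} directly after identifying the matrix of covector evaluations with $M \cdot \Gamma(s_i)$.)

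There is no substantive obstacle; the only subtlety is being careful that the statement is about the section vanishing in the fiber $\wedge^m W^\ast$ rather than vanishing as covectors on all of $k_{m+p-1}[t]$, and to note that the conclusion is independent of the choice of basis $f_1, \ldots, f_m$ since a change of basis merely rescales the determinant by a nonzero scalar.
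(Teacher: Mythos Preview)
Your proposal is correct and follows essentially the same approach as the paper: both reduce the vanishing of $\sigma_i(W)$ to the determinant $\det\bigl(\sigma_{i,j}(f_\ell)\bigr)_{j,\ell} = \Wr(f_1,\ldots,f_m)(s_i)$. The only cosmetic difference is that the paper computes this scalar directly as the natural pairing $(\sigma_{i,1}\wedge\cdots\wedge\sigma_{i,m})(f_1\wedge\cdots\wedge f_m)$, whereas you first interpret $\sigma_i(W)\in\wedge^m W^\ast$ and then detect its vanishing via linear dependence of the restricted covectors; these are two phrasings of the same identity.
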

\begin{proof} We observe that $\sigma_i(W)$ vanishes if and only if $(\sigma_{i,1}\wedge \cdots \wedge \sigma_{i,m})(f_1 \wedge \cdots \wedge f_m) = 0$. This evaluation of wedges of covectors can be computed as
\begin{align*}
    \sigma_i(W) &= (\sigma_{i,1}\wedge \cdots \wedge \sigma_{i,m})(f_1 \wedge \cdots \wedge f_m) \\
    &= \begin{vmatrix} f_1(s_i) & f_2(s_i) & \cdots & f_m(s_i) \\
   f_1'(s_i) & f_2'(s_i) & \cdots & f_m'(s_i) \\
  \vdots & \vdots & \ddots & \vdots \\
 f_1^{(m-1)}(s_i) & f_2^{(m-1)}(s_i) & \cdots & f_m^{(m-1)}(s_i) \end{vmatrix} \\
 &= \Wr(f_1, \ldots, f_m)(s_i).
\end{align*}
\end{proof}

\begin{corollary}\label{cor:wronski-intersection-interpretation} Consider the class of the polynomial $\Phi(t) = \prod_{i=1}^{mp}(t-s_i)$ in projective space $\P^{mp}_k$. We have that the following are equivalent for a point $W = \spn\left\{ f_1, \ldots, f_m \right\} \in \Gr_k(m,m+p)$:
\begin{enumerate}
    \item $W$ has nonempty intersection with each of $E_p(s_1), \ldots, E_p(s_{mp})$.
    \item $W$ is a zero of the section $\sigma : \Gr_k(m,m+p) \to \V$.
    \item $\Wr(f_1, \ldots, f_m)(t)$, as a polynomial in $t$, has a root at each $s_i$ for $1\le i\le mp$.
    \item $W$ lives in the fiber $\Wr^{-1} \left( \Phi(t) \right)$.
\end{enumerate}
\end{corollary}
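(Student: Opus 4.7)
The plan is to prove all four conditions are equivalent by chaining them through condition (3), which is the most concrete to reason about. I will argue (2) $\Leftrightarrow$ (3), then (1) $\Leftrightarrow$ (3), and finally (3) $\Leftrightarrow$ (4); all three are short consequences of material already established in the preliminaries.

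The equivalence (2) $\Leftrightarrow$ (3) is essentially a restatement of \autoref{prop:zeros-sigma-Wr}: since $\sigma = \bigoplus_{i=1}^{mp} \sigma_i$ as a section of $\V = \bigoplus_{i=1}^{mp} \wedge^m \mathcal{S}^\ast$, the section $\sigma$ vanishes at $W$ if and only if each $\sigma_i$ vanishes at $W$, which by the proposition happens if and only if $\Wr(f_1, \ldots, f_m)$ has a root at $s_i$ for every $i$. The equivalence (1) $\Leftrightarrow$ (3) will come from the paragraph following \autoref{rmk:flags-in-duality}: by \autoref{cor:H-meets-Fm} combined with the duality identifying the osculating flag $F_m(s)$ with $E_p(s)$ under Grassmann duality, we have that $\Wr(f_1, \ldots, f_m)(s_i) = 0$ precisely when $W$ meets $E_p(s_i)$ non-trivially. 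Ranging over all $i$ yields the equivalence.

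The final step, (3) $\Leftrightarrow$ (4), is a short degree count. The Wronski of a point on $\Gr_k(m,m+p)$ is a polynomial of degree at most $mp$, so vanishing at the $mp$ distinct scalars $s_1, \ldots, s_{mp}$ forces
\begin{align*}
    \Wr(f_1, \ldots, f_m)(t) = C \prod_{i=1}^{mp}(t-s_i) = C \cdot \Phi(t)
\end{align*}
for some $C \in k$; conversely, any such polynomial clearly has the $s_i$ as its roots. The only subtlety to confirm is that the Wronski is not identically zero, which is the classical fact that linearly independent polynomials have a nonvanishing Wronskian and is consistent with the standing assumption that $(m+p-1)!$ is invertible. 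Given this, $C \in k^\times$ and $[\Wr(f_1, \ldots, f_m)(t)] = [\Phi(t)]$ in $\P^{mp}_k$, i.e.\ $W \in \Wr^{-1}(\Phi(t))$. The main obstacle, if it deserves the name, is merely careful bookkeeping of the duality between the $F$-flags and $E$-flags in step (1) $\Leftrightarrow$ (3); no new machinery is needed beyond what the preliminaries provide.
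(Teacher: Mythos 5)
Your proof is correct and follows the same route the paper intends: the corollary is stated without proof precisely because it is the immediate combination of \autoref{prop:zeros-sigma-Wr}, the duality discussion after \autoref{rmk:flags-in-duality} (via \autoref{cor:H-meets-Fm}), and the degree-$\le mp$ count on the big open cell, which is exactly your chain of equivalences through (3). The nonvanishing of the Wronskian under the standing hypothesis that $(m+p-1)!$ is invertible is indeed the only point to flag, and you handle it appropriately.
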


\subsection{Big open cells}\label{sec:cells}

Let $Y \subseteq \P_k^{mp}$ denote the collection of monic polynomials in $\Proj\ k_{mp}[t]$ of degree equal to $mp$. This defines an open affine cell of projective space, of dimension $mp$. Denote by $X = \Wr^{-1}(Y)$ the preimage of this cell in the Grassmannian.

\begin{remark}\label{rmk:big-open-cell-properties} We refer to $X$ as the \textit{big open cell}, and remark a few properties about it.
\begin{enumerate}
    \item This is is a coordinate patch parametrized around the point $\spn \left\{ t^p, \ldots, t^{m+p-1} \right\}$, and therefore $X \cong \A^{mp}_k$.
    \item This is the \textit{big open cell} as defined in \cite[p.5]{EG}, from where we took the terminology.
    \item A point $W\in \Gr_k(m,m+p)$ lies in the open cell $X$ if and only its Wronski polynomial is of degree $mp$.
\end{enumerate}
\end{remark}

By this very last point, if $\Phi(t) := \prod_{i=1}^{mp}(t-s_i)$, then in order to study the fiber $\Wr^{-1}(\Phi(t))$, it suffices to restrict our attention to the big open cell $X$. Let $W \in X$ be an arbitrary point, and fix $e_{p+1}, \ldots, e_{p+m}$ to be monic polynomials which span $W$. Extending this to a basis $e_1, \ldots, e_{m+p}$ of $k_{m+p-1}[t]$, we can parametrize an open cell $U\cong \A^{mp}_k$ centered around $W$. For degree reasons, we observe that $U \subseteq X$, so that we have an induced map $\left. \Wr \right|_{ U } : U \to Y$.
\begin{remark} Let $W \in X$, and let $U$ be an affine cell parametrized around $W$ by a moving basis. Then the restricted Wronski $\left. \Wr \right|_{ U }: U \to Y$ has an orientation which is induced by the trivializations of $TU$ and $TY$.
\end{remark}
Thus we see that $\left. \Wr \right|_{ U }$ is a map of the form $\A^{mp}_k \to \A^{mp}_k$. What does this map look like? If $\left( x_{ij} \right)$ is a point on $\A^{mp}_k \cong U$, we have that its Wronski polynomial is a degree $mp$ polynomial of the form
\begin{align*}
    \Wr \left( \til{e}_{p+1}(x), \ldots, \til{e}_{p+m}(x) \right)(t) = \sum_{i=0}^{mp} h_i t^i.
\end{align*}
This is by definition a point $\left[ h_0: \ldots : h_{mp} \right]$ in projective space. In order to take the affine chart $Y$ we must divide out by $h_{mp}$, which we know to be non-zero by \autoref{rmk:big-open-cell-properties} since $W$ lies on $X$. Moreover since we picked the $e_{p+i}$'s to be monic, we know exactly what $h_{mp}$ is! By only picking out the highest degree terms in the Wronski, we can observe that $h_{mp}$ is the coefficient on the monomial $\Wr \left( t^{p}, t^{p+1}, \ldots, t^{m+p-1} \right)$, which is well-defined over $k$ via our hypothesis that $(m+p-1)!$ is invertible over $k$. It is well-known that this is the Vandermonde $V(p,p+1, \ldots, m+p-1)$ \cite[Lemma~1]{Bostan-Dumas}, and a simple induction argument shows that this is equal to $\prod_{i=1}^{m-1} i!$. We will now compare the local section $\sigma$ to the Wronski. In order to do this, we must first introduce some notation.

\begin{notation} We define the following maps from $\A^{mp}_k$ to itself:
\begin{itemize}
    \item By abuse of notation, denote by $V_{m,p}: \A^{mp}_k \to \A^{mp}_k$ the map which multiplies each coordinate by the scalar $\prod_{i=1}^{m-1} i!$.
    \item Denote by $\ev_s: \A^{mp}_k \to \A^{mp}_k$ the map which sends a tuple $(a_0, \ldots, a_{m+p-1})$, viewed as a polynomial $g(t) = \sum_{i=0}^{mp-1} a_i t^i$ to the tuple $\left( g(s_1), \ldots, g(s_{mp}) \right)$.
    \item Finally, denote by $\tr_s$ the translation map
    \begin{align*}
        \tr_s: \A^{mp}_k &\to \A^{mp}_k \\
        (x_1, \ldots, x_{mp}) &\mapsto \left( x_1 + V_{m,p} s_1^{mp}, \ldots, x_{mp} + V_{m,p} s_{mp}^{mp} \right).
    \end{align*}
\end{itemize}

\end{notation}

\begin{lemma}\label{lem:commutative-diagram-section-wronski} Let $W\in X$, and let $U$ be an open cell parametrized around $X$, determined by a monomial basis as above. Then the following diagram commutes:
\[\begin{tikzcd}
     &  & {\left.\mathcal{V}\right|_U}\ar[drr,"{\oplus_{i=1}^{mp}(\til{e}_{p+1}\wedge \cdots \wedge \til{e}_{m+p})}" above right] &  & \\
    U\ar[urr,"{\left. \sigma \right|_{ U }}" above left]\ar[dr,"{\left. \Wr \right|_{ U }}" below left] &  &  &  & \A^{mp}_k\\
     & Y\rar["{V_{m,p}}" below] & \A^{mp}\rar["{\ev_s}" below] & \A^{mp}\ar[ur,"{\text{tr}_s}" below right] &
\end{tikzcd} \]
\end{lemma}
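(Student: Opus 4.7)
The plan is to verify commutativity by evaluating both compositions on an arbitrary point $x = (x_{i,j}) \in U \cong \A^{mp}_k$ and checking that they produce the same tuple in $\A^{mp}_k$. Writing $W = \spn\{\til{e}_{p+1}, \ldots, \til{e}_{m+p}\}$ for the parametrized $m$-plane, both routes should yield $(\Wr(W)(s_1), \ldots, \Wr(W)(s_{mp}))$.

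The top route will be essentially immediate from \autoref{prop:zeros-sigma-Wr}. The trivialization $\mathcal{V}|_U \xto{\sim} \A^{mp}_k$ associated to $\oplus_{i=1}^{mp}(\til{e}_{p+1}\wedge\cdots\wedge\til{e}_{m+p})$ sends each summand $\sigma_i(W) \in \wedge^m W^*$ to its evaluation on $\til{e}_{p+1}\wedge\cdots\wedge\til{e}_{m+p} \in \wedge^m W$. The determinantal expansion carried out in the proof of \autoref{prop:zeros-sigma-Wr} identifies this evaluation with $\Wr(\til{e}_{p+1}, \ldots, \til{e}_{m+p})(s_i) = \Wr(W)(s_i)$, as desired.

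For the bottom route, I will write $\Wr(W)(t) = \sum_{j=0}^{mp} h_j t^j$ and invoke the fact (established in the paragraph preceding the statement) that $h_{mp} = V_{m,p} = \prod_{i=1}^{m-1} i!$ is a nonzero constant independent of $x$. The image $\Wr|_U(x) \in Y$, being the monic representative of the projective class $[h_0:\cdots:h_{mp}]$, has coordinates $(h_0/V_{m,p}, \ldots, h_{mp-1}/V_{m,p})$ in $\A^{mp} \cong Y$. Applying $V_{m,p}$ yields $(h_0, \ldots, h_{mp-1})$; then $\ev_s$ returns $(\sum_{j < mp} h_j s_i^j)_i$; and finally $\tr_s$ adds $V_{m,p} s_i^{mp} = h_{mp} s_i^{mp}$ to each coordinate, reinstating the missing top-degree term and producing $(\Wr(W)(s_i))_i$, matching the top route.

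The principal obstacle is purely bookkeeping and lies entirely in the bottom route: the affine chart $Y \subset \P^{mp}_k$ simultaneously divides by the leading term $h_{mp}$ and drops the top monomial, and the composite $\tr_s \circ \ev_s \circ V_{m,p}$ exists precisely to reverse these two operations. The argument hinges on the fact that $h_{mp}$ is the constant $V_{m,p}$ rather than a nontrivial function of $x$, which in turn holds because the perturbations introduced by the moving basis have degree at most $p-1$ and therefore cannot disturb the leading Wronski coefficient.
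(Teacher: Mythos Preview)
Your proof is correct and follows essentially the same approach as the paper: both arguments trace an arbitrary point of $U$ through the bottom route step by step---landing in $Y$ as $(h_0/h_{mp},\ldots,h_{mp-1}/h_{mp})$, clearing denominators with $V_{m,p}$, evaluating, and then reinstating the top-degree term via $\tr_s$---and both invoke \autoref{prop:zeros-sigma-Wr} to identify the result with the trivialized section along the top route. Your final paragraph spelling out why $h_{mp}$ is the constant $V_{m,p}$ is a helpful gloss that the paper leaves implicit in its choice of monic spanning polynomials.
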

\begin{proof} Fix $e_1, \ldots, e_{mp}$ as desired, and begin with a point $(x_{ij})$ on the affine space $U$. Let its Wronski be written as
\begin{align*}
    \Wr \left( \til{e}_{p+1}(x), \ldots, \til{e}_{p+m}(x) \right) = \sum_{i=0}^{mp} h_i t^i,
\end{align*}
where we have that $h_{mp} = \prod_{i=1}^{m-1}i!$ as above. Landing in $Y$, we have that $(x_{i,j})$ is mapped to the $mp$-tuple
\begin{align*}
    \left( \frac{h_0}{h_{mp}}, \ldots, \frac{h_{mp-1}}{h_{mp}} \right).
\end{align*}
Applying the map $V_{m,p}$, we multiply each factor through by $h_{mp}$ (which we remark is a constant which is independent of $(x_{ij})$), which clears denominators and maps us to
\begin{align*}
    \left( h_0, h_1, \ldots, h_{mp-1} \right).
\end{align*}
Applying the evaluation map $\ev_s$, we arrive at
\begin{align*}
    \left( \sum_{i=0}^{mp-1} h_i s_1^i, \sum_{i=0}^{mp-1} h_i s_2^i, \ldots, \sum_{i=0}^{mp-1} h_i s_{mp}^i \right).
\end{align*}
Finally applying our translation map, we obtain
\begin{align*}
    \left( \sum_{i=0}^{mp} h_i s_1^i, \ldots, \sum_{i=0}^{mp} h_i s_{mp}^i \right) = \left( \Wr\left(\til{e}_{p+1}(x), \ldots, \til{e}_{p+m}(x)\right)(s_1), \ldots, \Wr\left(\til{e}_{p+1}(x), \ldots, \til{e}_{p+m}(x)\right)(s_{mp})  \right).
\end{align*}
However we remark that by \autoref{prop:zeros-sigma-Wr} this is exactly what we obtain by applying $\sigma$ to the point $(x_{ij})$ and trivializing $\mathcal{V}$ over $U$.
\end{proof}

\begin{remark} The global $\A^1$-degree of the map $V_{m,p}$ is $\left\langle \left( \prod_{i=1}^{m-1} i! \right)^{mp} \right\rangle$, since we are simply multiplying the scalar $\prod_{i=1}^{m-1} i!$ into each of the $mp$ coordinates. The global degree of the translation map $\tr_s$ is just $\left\langle 1 \right\rangle$, since translation is $\A^1$-homotopic to the identity.
\end{remark}

\begin{lemma}\label{lem:vandermonde-evaluation} The global $\A^1$-degree of the evaluation map $\ev_{s_1, \ldots, s_{mp}}$ is precisely
\begin{align*}
    \deg^{\A^1} \ev_{(s_1, \ldots, s_{mp})} = \left\langle V(s) \right\rangle,
\end{align*}
where $V(s):=V(s_1, \ldots, s_{mp})$ denotes the Vandermonde determinant. As a result, since this is a rank one element of $\GW(k)$, this is the local $\A^1$-degree at any root of $\ev_{(s_1, \ldots, s_{mp})}$.
\end{lemma}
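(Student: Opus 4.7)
The plan is to observe that $\ev_s$ is a $k$-linear endomorphism of $\A^{mp}_k$, write down its matrix, and then invoke the standard fact that the $\A^1$-degree of a linear automorphism equals $\langle \det \rangle$.

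First, I would note that the assignment $(a_0,\ldots,a_{mp-1}) \mapsto \bigl(\sum_{i=0}^{mp-1} a_i s_1^i, \ldots, \sum_{i=0}^{mp-1} a_i s_{mp}^i\bigr)$ is linear in the coefficient vector. Its matrix with respect to the standard bases has $(j,i+1)$ entry equal to $s_j^i$ for $0 \le i \le mp-1$ and $1 \le j \le mp$, i.e., (the transpose of) the classical Vandermonde matrix. Its determinant is the Vandermonde determinant $V(s) = \prod_{1 \le i < j \le mp}(s_j - s_i)$, which is nonzero precisely because we assumed the $s_i$ are distinct. Hence $\ev_s$ is an automorphism.

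Next, I would appeal to the fact that for an invertible linear endomorphism $A: \A^n_k \to \A^n_k$, the global $\A^1$-degree is given by $\deg^{\A^1}(A) = \langle \det A \rangle \in \GW(k)$. This is well-known and can be derived, for instance, from the stable splitting of $\GL_n$ and the fact that elementary row operations are $\A^1$-homotopic to the identity, leaving only the diagonal scaling by $\det A$ in the last coordinate, whose local degree at the origin is $\langle \det A \rangle$. Applying this with $A = $ the Vandermonde matrix gives $\deg^{\A^1} \ev_s = \langle V(s) \rangle$.

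Finally, for the statement about local degrees: since $\ev_s$ is a linear automorphism, its only zero is the origin, so the local $\A^1$-degree at the origin equals the global $\A^1$-degree. More generally, for any point $y \in \A^{mp}_k$ the fiber $\ev_s^{-1}(y)$ is a single (reduced) point, and the local degree there equals the global degree by translation invariance; since $\langle V(s) \rangle$ is a rank-one class in $\GW(k)$ this is also the only possible value consistent with the decomposition of the global degree as a sum of local degrees. No step here is a serious obstacle — the main content is simply recognizing that $\ev_s$ \emph{is} linear and computing its determinant, after which the $\A^1$-degree formula for linear maps immediately finishes the argument.
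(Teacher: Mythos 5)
Your proposal is correct and matches the paper's argument in substance: the paper's (one-line) proof simply computes the partial derivatives $\partial \ev_j/\partial a_i = s_j^{i}$, i.e., observes that the Jacobian is the Vandermonde matrix, which is exactly your observation that $\ev_s$ is linear with matrix the Vandermonde matrix and $\A^1$-degree $\left\langle \det \right\rangle = \left\langle V(s) \right\rangle$. Your added justification via elementary matrices being $\A^1$-homotopic to the identity, and the remark that bijectivity forces the local degree at any fiber point to equal the global degree, just make explicit what the paper leaves implicit.
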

\begin{proof} Let $\ev_{(s_1, \ldots, s_{mp})} = (\ev_1, \ldots, \ev_{mp})$, and let $(a_0, \ldots, a_{mp-1})$ correspond to $a_0 + a_1 t + \ldots + a_{mp-1}t^{mp-1} + t^{mp}$. Then we can see $\frac{\partial \ev_j}{\partial a_i} = s_j^{i-1}$.
\end{proof}

\begin{lemma}\label{lem:local-index-is-a1-local-degree-wronski} Let $s_1, \ldots, s_{mp}$ be distinct, and let $\Phi(t) = \prod_{i=1}^{mp}(t-s_i)$. For any $[W] \in \Wr^{-1}(\Phi(t))$, we have that
\begin{align*}
    \ind_W \sigma = \left\langle V(s) \cdot  \left( \prod_{i=1}^{m-1} i! \right)^{mp}\right\rangle \cdot \deg^{\A^1}_W \Wr.
\end{align*}
\end{lemma}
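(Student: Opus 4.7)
The plan is to unpack the commutative diagram of \autoref{lem:commutative-diagram-section-wronski} at a specific local model, and then apply multiplicativity of the local $\A^1$-Brouwer degree along a composition. Fix $W\in \Wr^{-1}(\Phi(t))$, choose monic polynomials $e_{p+1},\ldots,e_{p+m}$ spanning $W$, extend to a basis of $k_{m+p-1}[t]$, and let $U$ be the resulting open cell around $W$. The Nisnevich coordinates on $U$ from the moving basis, together with the trivialization of $\left.\mathcal{V}\right|_U$ by $\bigoplus_{i=1}^{mp}(\til{e}_{p+1}\wedge\cdots\wedge \til{e}_{m+p})$, are exactly what is used in \autoref{def:local-index-sigma}; with these choices \autoref{lem:commutative-diagram-section-wronski} realizes $\ind_W\sigma$ as the local $\A^1$-degree at $W$ of the composite
\begin{align*}
    \tr_s \circ \ev_s \circ V_{m,p} \circ \left.\Wr\right|_U.
\end{align*}

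Next, I would invoke multiplicativity of the local $\A^1$-degree along composition. Since $V_{m,p}$ (diagonal scaling by the nonzero constant $\prod_{i=1}^{m-1} i!$), $\ev_s$ (a linear isomorphism with invertible Vandermonde matrix, as the $s_i$ are distinct), and $\tr_s$ (translation) are each automorphisms of $\A^{mp}_k$, every intermediate point has a unique preimage, so multiplicativity collapses to
\begin{align*}
    \ind_W\sigma \;=\; \deg^{\A^1}(\tr_s)\cdot \deg^{\A^1}(\ev_s)\cdot \deg^{\A^1}(V_{m,p})\cdot \deg_W^{\A^1}\Wr.
\end{align*}
For the three scalar factors I would appeal to the remark preceding \autoref{lem:vandermonde-evaluation}: $\deg^{\A^1}(V_{m,p})=\left\langle\left(\prod_{i=1}^{m-1}i!\right)^{mp}\right\rangle$ since scaling each of $mp$ coordinates by $c$ produces Jacobian $c^{mp}$, and $\deg^{\A^1}(\tr_s)=\langle 1\rangle$ since translation is $\A^1$-homotopic to the identity. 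Then \autoref{lem:vandermonde-evaluation} gives $\deg^{\A^1}(\ev_s)=\langle V(s)\rangle$. Combining via $\langle a\rangle\cdot\langle b\rangle = \langle ab\rangle$ in $\GW(k)$ yields the claimed identity.

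The step that requires a touch of care is the multiplicativity invocation: one needs that the local $\A^1$-degree of a composition of pointed endomorphisms of $\A^n_k$ at a point equals the product of the local degrees at the successive images, which is a standard property provided each preimage under consideration is isolated; here this is trivially satisfied because the three maps following $\Wr$ are isomorphisms of affine space. A minor bookkeeping check is that the trivialization of $\left.\mathcal{V}\right|_U$ and the Nisnevich chart on $U$ are precisely the ones appearing in the diagram of \autoref{lem:commutative-diagram-section-wronski}, so that no auxiliary unit correction from changing trivializations appears. With that confirmed, the equality $\deg_W^{\A^1}(\left.\Wr\right|_U)=\deg_W^{\A^1}\Wr$ holds by locality of the $\A^1$-degree on the Zariski open $U\ni W$, and the proof concludes.
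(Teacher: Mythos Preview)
Your proposal is correct and follows essentially the same approach as the paper: the paper's proof simply states that it follows from applying the local degree to the commutative diagram in \autoref{lem:commutative-diagram-section-wronski} and deferring to \autoref{lem:vandermonde-evaluation}, and you have spelled out exactly this argument in detail by invoking multiplicativity of the local $\A^1$-degree along the composite $\tr_s \circ \ev_s \circ V_{m,p} \circ \left.\Wr\right|_U$.
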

\begin{proof} The proof follows from applying the local degree to the commutative diagram in \autoref{lem:commutative-diagram-section-wronski}, and deferring to the computation in \autoref{lem:vandermonde-evaluation}.
\end{proof}

An explicit formula for $\ind_W \sigma$ at any simple root will be provided in Section \ref{sec:local-index}.

\begin{lemma}\label{lem:Levine-euler-class} If $\V$ is relatively orientable, then its Euler class $e(\V) \in \GW(k)$ is an integer multiple of the hyperbolic element $\mathbb{H}$.
\end{lemma}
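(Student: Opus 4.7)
The plan is a direct appeal to the following general principle, due to Levine \cite{Levine}: if $E \to X$ is a relatively orientable vector bundle over a smooth proper $k$-scheme that decomposes as a direct sum $E = L \oplus E'$ with $L$ of odd rank, then the Euler class $e(E) \in \GW(k)$ is an integer multiple of the hyperbolic form $\H$. The bundle $\V = \bigoplus_{i=1}^{mp} \wedge^m \mathcal{S}^*$ is manifestly a direct sum of $mp$ line bundles (each of odd rank $1$), so the hypothesis is satisfied with, say, $L = \wedge^m \mathcal{S}^*$ and $E' = \bigoplus_{i=2}^{mp} \wedge^m \mathcal{S}^*$, and the conclusion is immediate.

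Although we only need the principle as a black box, the mechanism behind it is worth recording. One selects a section $\sigma = (\sigma_1, \sigma')$ of $\V = L \oplus E'$ with isolated zeros and compares it with $\sigma_- = (-\sigma_1, \sigma')$. The fiberwise involution $(l,v) \mapsto (-l,v)$ acts by $-1$ on $\det \V$ because $L$ has odd rank, and tracking this sign through the relative orientation shows that $\ind_W \sigma_- = \langle -1 \rangle \cdot \ind_W \sigma$ at each common zero $W$. Section-independence of the Euler class \cite[Theorem~1.1]{BW3} allows us to regroup the contributions from $\sigma$ and $\sigma_-$ into sums of terms $\ind_W \sigma + \langle -1 \rangle \ind_W \sigma$, each of which is a multiple of $\langle 1 \rangle + \langle -1 \rangle = \H$, so that $e(\V)$ lies in $\Z \cdot \H$.

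The main technical obstacle is to verify that the effect of negating the $L$-component of the section on the local $\A^1$-degree is precisely multiplication by $\langle -1 \rangle$; this requires a careful choice of Nisnevich coordinates and local trivializations that are simultaneously compatible with the relative orientation $j$ and with the direct sum decomposition $\V = L \oplus E'$. In our setting this compatibility is essentially automatic, since all summands coincide with $\wedge^m \mathcal{S}^*$ and the relative orientation was constructed in the previous subsection precisely so as to be compatible with Nisnevich coordinates arising from moving bases, which simultaneously trivialize every summand. I would therefore not reprove the general sign-tracking computation here, but instead invoke \cite{Levine} directly.
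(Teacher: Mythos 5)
Your operative argument---quoting from \cite{Levine} the principle that a relatively oriented bundle admitting an odd-rank direct summand has Euler number in $\Z \cdot \mathbb{H}$, and observing that $\V = \bigoplus_{i=1}^{mp} \wedge^m \mathcal{S}^\ast$ is visibly a sum of line bundles---is the same move the paper makes: the paper offers no independent proof and simply defers to \cite[4.3]{Levine} and the discussion in \cite[Section~4]{SW}. At that citation level your proposal is fine and matches the paper.

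However, the ``mechanism'' you record is not actually a proof of the statement, and as written it would fail if taken as one. Comparing $\sigma = (\sigma_1, \sigma')$ with $\sigma_- = (-\sigma_1, \sigma')$ does give $\ind_W \sigma_- = \left\langle -1 \right\rangle \ind_W \sigma$ and, by section-independence, $e(\V) = \left\langle -1 \right\rangle e(\V)$; but adding the two Euler numbers, as in your regrouping, only yields $2\,e(\V) = \left(\sum_W \text{rank}\, \ind_W \sigma\right) \mathbb{H}$. Since $e(\V) \in \Z\cdot\mathbb{H}$ is equivalent to the vanishing of its image in the Witt ring $W(k)$, what you have shown is only that this Witt class is $2$-torsion, and $W(k)$ (hence $\GW(k)$) has nontrivial $2$-torsion for many fields, so you cannot divide by $2$. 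Concretely, over $k = \F_5$ (where $-1$ is a square) the class $q = \left\langle 1, a \right\rangle$ with $a$ a nonsquare satisfies $q = \left\langle -1 \right\rangle q$ and $2q = 2\mathbb{H}$, yet $q \neq \mathbb{H}$ because the discriminants differ; so the sign trick alone is consistent with $e(\V)$ differing from a multiple of $\mathbb{H}$ by a $2$-torsion class. The cited results supply strictly more than this: they give actual vanishing (not merely $2$-torsion) of the Witt class of the Euler number of a relatively oriented bundle with an odd-rank summand, e.g.\ via the vanishing of the Euler class of an odd-rank bundle in Witt-sheaf cohomology. Since you ultimately invoke \cite{Levine} as a black box, your proof stands at the same level as the paper's, but the heuristic paragraph should either be deleted or corrected so as not to suggest that the $\sigma \mapsto \sigma_-$ comparison by itself establishes the lemma.
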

For a proof of this lemma, see \cite[4.3]{Levine} as well as the discussion in \cite[Section~4]{SW}.

\begin{theorem}\label{thm:deg-wronski-both-even} Let $m$ and $p$ be even. Then the $\A^1$-degree of the Wronski is
\begin{align*}
    \deg^{\A^1} \Wr &= \frac{d(m,p)}{2}\mathbb{H}.
\end{align*}
\end{theorem}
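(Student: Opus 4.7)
The plan is to compute the Euler number $e(\V, \sigma)$ in two different ways and equate them. First choose distinct scalars $s_1, \ldots, s_{mp} \in k$ such that $\Phi(t) := \prod_i(t-s_i)$ represents a generic point of the open cell $Y\subseteq \P^{mp}_k$; then the fiber $\Wr^{-1}(\Phi)$ is a finite collection of reduced closed points all contained in the big open cell $X$ of \autoref{rmk:big-open-cell-properties}. By \autoref{cor:wronski-intersection-interpretation}, this fiber coincides with the zero locus $Z(\sigma)$, so $\sigma$ has isolated zeros and the Euler number $e(\V, \sigma) = \sum_{W \in Z(\sigma)} \ind_W \sigma$ is well-defined.

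Next, combine two descriptions of this Euler number. Summing the local comparison of \autoref{lem:local-index-is-a1-local-degree-wronski} over $W \in Z(\sigma)$ yields
\begin{align*}
e(\V, \sigma) \;=\; \Big\langle V(s) \cdot \bigl(\textstyle\prod_{i=1}^{m-1} i!\bigr)^{mp} \Big\rangle \cdot \deg^{\A^1} \Wr.
\end{align*}
On the other hand, since $m$ and $p$ are both even, \autoref{prop:V-orientable} ensures $\V$ is relatively orientable, and \autoref{lem:Levine-euler-class} then forces $e(\V, \sigma) = N \mathbb{H}$ for some $N \in \Z$. The identity $\langle c \rangle \cdot \mathbb{H} = \langle c, -c\rangle = \mathbb{H}$ in $\GW(k)$ for any unit $c$, together with the invertibility of the rank-one class $\langle c \rangle$, lets us cancel the scalar prefactor on the right-hand side to obtain $\deg^{\A^1} \Wr = N \mathbb{H}$.

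To pin down the integer $N$, apply the rank homomorphism $\GW(k) \to \Z$. The rank of the $\A^1$-degree of a finite map between smooth proper $k$-schemes recovers the classical Brouwer degree of the corresponding complex map, which Schubert computed to be $d(m,p) = n_\C$ for the Wronski. Since $\mathbb{H}$ has rank $2$, equating ranks gives $2N = d(m,p)$ and hence $N = d(m,p)/2$. The main subtlety, which is absorbed into the earlier lemmas, is that the trivialization of $\left.\V\right|_U$ used in \autoref{lem:commutative-diagram-section-wronski} must be compatible with the relative orientation of $\V$ and with the Nisnevich coordinates on $U$, so that the local Euler indices genuinely factor as scalar multiples of local $\A^1$-degrees of the Wronski; this is precisely where the even-even parity hypothesis is essential, since without it \autoref{prop:V-orientable} fails and the hyperbolic cancellation step is not available.
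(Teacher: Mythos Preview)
Your proposal is correct and follows essentially the same approach as the paper: sum the local comparison of \autoref{lem:local-index-is-a1-local-degree-wronski} to relate $\deg^{\A^1}\Wr$ to $e(\V,\sigma)$, invoke \autoref{lem:Levine-euler-class} (using the relative orientability from \autoref{prop:V-orientable}) to see the Euler number is hyperbolic, absorb the rank-one prefactor via $\langle c\rangle\cdot\mathbb{H}=\mathbb{H}$, and read off the integer from Schubert's rank computation. Your write-up is in fact slightly more explicit than the paper's about the cancellation step and the compatibility-of-trivializations subtlety, but the argument is the same.
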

\begin{proof} Let $s_1, \ldots, s_{mp}$ be distinct, and let $V(s) = V(s_1, \ldots, s_{mp})$ denote their Vandermonde determinant. Via \autoref{lem:local-index-is-a1-local-degree-wronski} the degree of the Wronski is precisely
\begin{align*}
    \deg^{\A^1} \Wr &= \sum_{W\in Z(\Wr)} \deg^{\A^1}_W \Wr = \left\langle V(s) \cdot  \left( \prod_{i=1}^{m-1} i! \right)^{mp}\right\rangle \sum_{W \in Z(\sigma)} \ind_W \sigma \\
    &= \left\langle V(s) \cdot  \left( \prod_{i=1}^{m-1} i! \right)^{mp}\right\rangle e(\mathcal{V},\sigma).
\end{align*}
By \autoref{lem:Levine-euler-class}, the Euler class is a multiple of $\mathbb{H}$, and since we know the rank of the bilinear form $\deg^{\A^1}\Wr$ in the case where $m$ and $p$ are both even via the classical computation of Schubert, we can determine which integer multiple of the hyperbolic element it must be.
\end{proof}

This global count unifies the real and complex degrees of the Wronski into one computation in these parities --- that is, we recover the complex degree by taking the rank of this form, and the real degree by taking the signature. Contained within the local degree of the Wronski is further geometric information, which we can now explore.

\section{A formula for the local index}\label{sec:local-index}

In this section we will provide a formula for the local degree $\deg_W^{\A^1} \Wr$, when the Wronski has a simple root at the point $W$. To parametrize an affine open cell around $W$, we first fix a basis $e_1, \ldots, e_{m+p}$ of $k_{m+p-1}[t]$ so that $W = \spn\left\{ e_{p+1}, \ldots, e_{m+p} \right\}$. Let $\phi_k$ denote the dual basis element to $e_k$. We may then rewrite the covectors $\sigma_{\ell,1}, \ldots, \sigma_{\ell,m}$ in this dual basis. That is, for any $1\le j\le m$, we write
\begin{equation}
\label{eqn:lambda-in-cobasis}
\begin{aligned}
    \sigma_{\ell,j} := \sum_{k=1}^{m+p} e_k^{(j-1)}(s_\ell) \phi_k,
\end{aligned}
\end{equation}
It is easy to see by acting on $e_k$ by $\sigma_{\ell,j}$, that $e_k^{(j-1)}(s_\ell)$ will be the coefficient on $\phi_k$. By \autoref{eqn:osculating-flag-F}, we have that $\spn \left\{ \sigma_{\ell,1}, \ldots, \sigma_{\ell,m} \right\} = F_m(s_\ell)$, thus by a forgivable abuse of notation we refer to the matrix of coefficients of these vectors as $F_m(s_\ell)$:
\begin{equation}
\label{eqn:B-ell-definition}
\begin{aligned}
    F_m(s_\ell) &=  \begin{pmatrix} e_1(s_\ell) &  e_1'(s_\ell)  & \cdots & e_1^{(m-1)}(s_\ell) \\
            e_2(s_\ell) & e_2'(s_\ell) & \cdots &  e_2^{(m-1)}(s_\ell)\\
            \vdots & \vdots & \ddots & \vdots \\
           e_{m+p}(s_\ell)  & e_{m+p}'(s_\ell)  & \cdots & e_{m+p}^{(m-1)}(s_\ell) \end{pmatrix} = \left(\begin{array}{l | l | l} \text{coeffs of } \sigma_{\ell,1}  & \cdots & \text{coeffs of } \sigma_{\ell,m}  \end{array}\right).
\end{aligned}
\end{equation}
We will define the following notation to identify a \textit{distinguished minor} of this matrix. Namely we want to take minors consisting of all the bottom $m$ rows except one, and one row from higher in the matrix. Explicitly, let $1\le \gamma \le m$ and $1\le k\le p$. Then we denote by $\alpha(\gamma,\kappa)$ the multiindex
\begin{align*}
    \alpha(\gamma,k) := \left\{ k, p+1, \ldots, p+\gamma-1, p+\gamma + 1, \ldots, p+m \right\}.
\end{align*}
In particular this gives us $z_{\alpha(\gamma,k)}(F_m(s_\ell))$,which is the $\alpha(\gamma,k)$th Pl\"{u}cker coordinate of $F_m(s_\ell)$:
\begin{align*}
    z_{\alpha(\gamma,k)}(F_m(s_\ell)) &= \det\begin{pmatrix} e_k(s_\ell) &  e_k'(s_\ell)  & \cdots & e_k^{(m-1)}(s_\ell) \\
    e_{p+1}(s_\ell) &  e_{p+1}'(s_\ell)  & \cdots & e_{p+1}^{(m-1)}(s_\ell) \\
    e_{p+2}(s_\ell) &  e_{p+2}'(s_\ell)  & \cdots & e_{p+2}^{(m-1)}(s_\ell) \\
    \vdots & \vdots &  \ddots  & \vdots \\
    e_{p+\gamma-1}(s_\ell) &  e_{p+\gamma-1}'(s_\ell)  & \cdots & e_{p+\gamma-1}^{(m-1)}(s_\ell) \\
    e_{p+\gamma+1}(s_\ell) &  e_{p+\gamma+1}'(s_\ell)  & \cdots & e_{p+\gamma+1}^{(m-1)}(s_\ell) \\
    \vdots  & \vdots  & \ddots  & \vdots \\
e_{m+p}(s_\ell) &  e_{m+p}'(s_\ell)  & \cdots & e_{m+p}^{(m-1)}(s_\ell) 
\end{pmatrix} \\
&= \Wr\left(e_k, e_{p+1}, \ldots, \widehat{e_{p+\gamma}}, \ldots, e_{m+p}\right)(s_\ell).
\end{align*}
We recall that the fiber of the Wronski over $\prod_{i=1}^{mp}(t-s_i)$ counts the number of $m$-planes meeting $E_p(s_1), \ldots, E_p(s_{mp})$ non-trivially. Here we can state a new geometric interpretation for the local index of the Wronski --- namely it picks up a determinantal relation between distinguished Pl\"{u}cker coordinates of the planes $F_m(s_i)$ (under duality these can be considered as distinguished Pl\"{u}cker coordinates of the $E_p(s_i)$'s). We remark that while our computation of the global degree of the Wronski only held when $m$ and $p$ were both even, the following result holds in all parities and over arbitrary fields, subject to the ongoing assumption that $\left( m+p-1 \right)!^{-1}\in k$.

\begin{theorem}\label{thm:formula-local-index} Let $W$ be a simple preimage of the Wronski in the fiber $\Wr^{-1} \left( \prod_{i=1}^{mp}(t-s_i) \right)$, and let $e_1, \ldots, e_{mp}$ be a basis chosen so that $W = \spn\left\{ e_{p+1}, \ldots, e_{m+p} \right\}$. Then we have that
\begin{align*}
    \deg_W^{\A^1} \Wr &= \gw{ C \cdot \det \mathcal{B} },
\end{align*}
where $C$ is the global constant
\begin{align*}
    C = V(s_1, \ldots, s_{mp}) \left( \prod_{i=1}^{m-1} i! \right)^{mp} (-1)^{m(m-1)p/2},
\end{align*}
where $V(s_1, \ldots, s_{mp})$ is the Vandermonde determinant of the $s_i$'s, and $\mathcal{B}$ is the $mp \times mp$-matrix defined by
\begin{align*}
    \mathcal{B} &= 
    \begin{pmatrix}
    z_{\alpha(1,1)}(F_m(s_1)) & z_{\alpha(1,1)}(F_m(s_2)) & \cdots & z_{\alpha(1,1)}(F(s_{mp})) \\
       z_{\alpha(1,2)}(F_m(s_1)) & z_{\alpha(1,2)}(F_m(s_2)) & \cdots & z_{\alpha(1,2)}(F(s_{mp})) \\
        \vdots & \vdots & \ddots & \vdots \\
       z_{\alpha(1,p)}(F_m(s_1)) &z_{\alpha(1,p)}(F_m(s_2)) & \cdots & z_{\alpha(1,p)}(F(s_{mp})) \\
       z_{\alpha(2,1)}(F_m(s_1)) & z_{\alpha(2,1)}(F_m(s_2)) & \cdots & z_{\alpha(2,1)}(F(s_{mp})) \\
        \vdots & \vdots & \ddots & \vdots \\
        z_{\alpha(2,p)}(F_m(s_1)) & z_{\alpha(2,p)}(F_m(s_2)) & \cdots & z_{\alpha(2,p)}(F(s_{mp})) \\
        \vdots & \vdots & \ddots & \vdots \\
         z_{\alpha(m,p)}(F_m(s_1)) & z_{\alpha(m,p)}(F_m(s_2)) & \cdots & z_{\alpha(m,p)}(F(s_{mp}))
 \end{pmatrix},
\end{align*}
where these Pl\"{u}cker coordinates are written in the basis $\left\{ \phi_i \right\}$.
\end{theorem}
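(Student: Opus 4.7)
The plan is to compute the local index $\ind_W\sigma$ directly in Nisnevich coordinates using the Jacobian formula for the $\A^1$-degree at a simple zero, and then translate back through \autoref{lem:local-index-is-a1-local-degree-wronski} to recover $\deg_W^{\A^1}\Wr$. First I would use the moving basis $\til{e}_j=e_j+\sum_k x_{j-p,k}e_k$ to parametrize an affine open $U\cong\A^{mp}_k$ with $W$ at the origin, and by \autoref{prop:distinguished-bases-Gr} trivialize each summand of $\V|_U=\bigoplus_{i=1}^{mp}\wedge^m\S^*|_U$ by $\til{\phi}_{p+1}\wedge\cdots\wedge\til{\phi}_{m+p}$. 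In these trivializations $\sigma$ becomes a map $(F_1,\ldots,F_{mp})\colon\A^{mp}_k\to\A^{mp}_k$, and unpacking $\sigma_i\bigl(\til{e}_{p+1}\wedge\cdots\wedge\til{e}_{p+m}\bigr)$ gives
\[
F_i(x)=\det\!\bigl[\til{e}_{p+\ell}^{\,(j-1)}(s_i)\bigr]_{1\le j,\ell\le m}.
\]

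Since $W$ is a simple preimage of $\Wr$, the commutative diagram of \autoref{lem:commutative-diagram-section-wronski} forces $W$ to be a simple zero of $\sigma$, whence $\ind_W\sigma=\gw{\det J(0)}\in\GW(k)$, where $J(0)$ is the Jacobian matrix of $(F_1,\ldots,F_{mp})$ at the origin. By construction of the moving basis, column $\ell$ of the matrix defining $F_i$ depends only on $\{x_{\ell,k}\}_k$, and linearly, so by multilinearity
\[
\frac{\partial F_i}{\partial x_{\gamma,k}}\bigg|_0=\det\!\bigl[\,\text{the matrix }[e_{p+\ell}^{\,(j-1)}(s_i)]_{j,\ell}\text{ with column }\gamma\text{ replaced by }(e_k^{\,(j-1)}(s_i))_j\,\bigr].
\]
Transposing and performing $\gamma-1$ adjacent row swaps to move the row containing $e_k$ to the top identifies this with $(-1)^{\gamma-1}\,z_{\alpha(\gamma,k)}(F_m(s_i))$. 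Ordering the $(\gamma,k)$ indices lexicographically as in the statement of the theorem, this exhibits $J(0)=\mathcal{B}^T D$ with $D=\mathrm{diag}((-1)^{\gamma-1})_{(\gamma,k)}$. Each value $\gamma$ appears $p$ times on the diagonal, so $\det D=(-1)^{p\cdot m(m-1)/2}$, and therefore $\det J(0)=(-1)^{pm(m-1)/2}\det\mathcal{B}$.

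Finally, since $\gw{a}^2=\gw{1}$ in $\GW(k)$, multiplying both sides of \autoref{lem:local-index-is-a1-local-degree-wronski} by $\gw{V(s)\bigl(\prod_{i=1}^{m-1}i!\bigr)^{mp}}$ gives
\[
\deg_W^{\A^1}\Wr=\gw{V(s)\!\left(\textstyle\prod_{i=1}^{m-1}i!\right)^{mp}}\cdot\ind_W\sigma=\gw{C\cdot\det\mathcal{B}},
\]
exactly as claimed. The bulk of the work is the sign bookkeeping: identifying $\partial F_i/\partial x_{\gamma,k}|_0$ with a signed Pl\"ucker coordinate requires a careful row-permutation argument, and aggregating these per-column signs $(-1)^{\gamma-1}$ across all $p$ values of $k$ to produce the global factor $(-1)^{pm(m-1)/2}$ is the single delicate technical point. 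Everything else reduces to routine unpacking of the trivializations and the standard Jacobian class formula for the $\A^1$-degree at a simple zero.
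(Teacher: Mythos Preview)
Your proposal is correct and follows essentially the same approach as the paper: trivialize $\sigma$ in the moving-basis chart, compute the Jacobian at the origin, identify each partial as a signed Pl\"ucker coordinate $(-1)^{\gamma-1}z_{\alpha(\gamma,k)}(F_m(s_i))$, collect the sign $(-1)^{pm(m-1)/2}$, and invoke \autoref{lem:local-index-is-a1-local-degree-wronski}. The only cosmetic difference is that the paper expands $\wedge_{j}\sigma_{\ell,j}$ in the cobasis $\til{\phi}_k$ and reads off the coefficient on $\til{\phi}_{p+1}\wedge\cdots\wedge\til{\phi}_{m+p}$, whereas you write $F_i$ directly as a determinant in the $\til{e}_{p+\ell}$ and differentiate via multilinearity; these are the same computation in different clothing.
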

\begin{proof} Since $Z(\sigma) = Z(\Wr)$, we may suppose that $\sigma$ has a simple zero at the top point $W = \spn\left\{ e_{p+1}, \ldots, e_{m+p} \right\} \in \Gr_k(m,m+p)$, and rewrite the covectors of $\sigma$ in the associated cobasis, as in \autoref{eqn:lambda-in-cobasis}. Then we have an affine coordinate chart $U$ around $W$, and we can trivialize $\V$ over $U$ by direct sums of $\til{\phi}_{p+1} \wedge \cdots \wedge \til{\phi}_{m+p}$
We then obtain functions $F_1, \ldots, F_{mp}$ on $U$ defined by
\begin{equation}\label{eqn:F-ell-definition}
\begin{aligned}
    \wedge_{j=1}^m \sigma_{\ell,j} = F_\ell \cdot \til{\phi}_{p+1} \wedge \cdots \wedge \til{\phi}_{m+p}.
\end{aligned}
\end{equation}
The $F_i$'s are local representations of $\sigma$ in the chart $U$, centered around $W$. As $W$ is a simple zero, then in order to compute $\ind_W \sigma$ it suffices to compute the partial derivatives of the functions $F_i$ at the origin of $U$ (which is the point $W = e_{p+1} \wedge \cdots \wedge e_{m+p}$). By the definition of the moving basis in \autoref{eqn:moving-basis}, we have a change of basis formula\footnote{By allowing $\phi_i$ to act on $\til{e}_j$, we get the coefficient of $\til{\phi}_j$ in $\phi_i$.}
\begin{equation}\label{eqn:change-of-cobasis-formula}
\begin{aligned}
    \phi_k = \begin{cases} \til{\phi}_k + \sum_{n=1}^{m} x_{n,k}\til{\phi}_{p+n} & 1 \le k \le p \\ \til{\phi}_k & p+1 \le k \le m+p. \end{cases}
\end{aligned}
\end{equation}
For any fixed $\ell$, we may then write
\begin{equation}\label{eqn:change-of-basis-lin-forms}
\begin{aligned}
    \bigwedge_{j=1}^m \sigma_{\ell,j} &= \bigwedge_{j=1}^m\left( \sum_{k=1}^{m+p} e_k^{(j-1)}(s_\ell) \phi_k \right) \\
    &= \bigwedge_{j=1}^m \left( \sum_{k=1}^p e_k^{(j-1)}(s_\ell) \left( \til{\phi}_k + \sum_{n=1}^m x_{n,k}\til{\phi}_{p+n}  \right) + \sum_{q=p+1}^{m+p} e_q^{(i-1)}(s_\ell) \til{\phi}_q   \right).
\end{aligned}
\end{equation}
Since we will be evaluating this at $e_p \wedge \cdots \wedge e_{m+p-1}$ we only need to worry about terms which are of the form $\til{\phi}_{p+1}\wedge\cdots\wedge \til{\phi}_{m+p-1}$. In particular we can forget about the $\til{\phi}_k$ terms for $1\le k\le p$, and we obtain
\begingroup
\allowdisplaybreaks
\begin{equation}\label{eqn:change-basis-ind-full}
\begin{aligned}
    &\bigwedge_{j=1}^m \left( \sum_{k=1}^p e_k^{(j-1)}(s_\ell)\left(\sum_{n=1}^m x_{n,k}\til{\phi}_{p+n}  \right) + \sum_{k=p+1}^{m+p} e_k^{(i-1)}(s_\ell) \til{\phi}_k   \right) \\
    =& \bigwedge_{j=1}^m \left( \sum_{k=1}^p \sum_{n=p+1}^{m+p}e_k^{(j-1)}(s_\ell) x_{n-p,k} \til{\phi}_n + \sum_{q=p+1}^{m+p} e_q^{(j-1)}(s_\ell) \til{\phi}_q\right) \\
    &= \bigwedge_{j=1}^m \left( \sum_{n=p+1}^{m+p} \left( e_n^{(j-1)}(s_\ell) + \sum_{k=1}^{p} e_k^{(j-1)}(s_\ell) x_{n-p,k} \right) \til{\phi}_n \right) \\
    =& \det (\mathcal{C})\cdot \til{\phi}_{p+1}\wedge \cdots \wedge \til{\phi}_{m+p},
\end{aligned}
\end{equation}
\endgroup
where $\mathcal{C}_{j,\gamma}$ is the coefficient on $\til{\phi}_{p+\gamma}$ in the $j$th exterior power above. Explicitly,
\begin{align*}
    \mathcal{C}_{j,\gamma} = e_{p+\gamma}^{(j-1)}(s_\ell) + \sum_{k=1}^{p} e_k^{(j-1)}(s_\ell) x_{\gamma,k}.
\end{align*}
Since we will evaluate partials at the origin, we only need to pick out linear terms in the $x_{\gamma,k}$'s, so we can forget higher order terms as well as constant terms. Thus, we see that
\begin{align*}
    \det(\mathcal{C}) &= \sum_{\sigma \in S_m} \sgn(\sigma) \prod_{\gamma=1}^{m} \left( e_{p+\gamma}^{(\sigma(\gamma)-1)}(s_\ell) + \sum_{k=1}^{p} e_k^{(\sigma(\gamma)-1)}(s_\ell) x_{\gamma,k} \right).
\end{align*}
For a fixed $x_{\gamma,k}$ the constant coefficient on $x_{\gamma,k}$ is
\begin{equation}\label{eqn:partials-of-local-representation-of-sigma}
\begin{aligned}
    \left.\frac{\partial F_{\ell}}{\partial x_{\gamma,k}}\right|_0 &= \sum_{\sigma \in S_m} \sgn(\sigma) e_k^{(\sigma(\gamma)-1)}(s_\ell) \prod_{\substack{1\le a\le m \\ a\ne \gamma}} e_{p+a}^{(\sigma(a)-1)}(s_\ell),
\end{aligned}
\end{equation}
and we can recognize this as a Pl\"{u}cker coordinate!
\begin{align*}
    \left.\frac{\partial F_{\ell}}{\partial x_{\gamma,k}}\right|_0  &= \Wr(e_{p+1}, \ldots, e_{p+\gamma-1}, e_k, e_{p+\gamma+1}, \ldots, e_{m+p})(s_\ell) \\
    &= (-1)^{\gamma-1} \Wr\left(e_k, e_{p+1}, \ldots, \widehat{e_{p+\gamma}}, \ldots, e_{m+p}\right)(s_\ell) \\
    &= (-1)^{\gamma-1} z_{\alpha(\gamma,k)} \left( F_m(s_\ell) \right).
\end{align*}
In particular by \autoref{rmk:xij-from-pluck-coords} we have that $(-1)^{\gamma}z_{\alpha(\gamma,k)}(F_m(s_\ell))$ is the $(\gamma,k)$th affine coordinate of the plane $F_m(s_\ell)$. Varying over all $(\gamma,k)$ and $\ell$, we obtain the local index as
\begin{align*}
    \ind_W \sigma &= \left\langle \det \left( \frac{\del F_\ell}{\del x_{\gamma,k}} \right)_{(\gamma,k),\ell} \right\rangle \\
    &= \left\langle \det (-1)^{\gamma-1}\left( z_{\alpha(\gamma,k)}(F_m(s_\ell)) \right)_{(\gamma,k),\ell} \right\rangle.
\end{align*}
As $\gamma$ and $k$ vary, we can pull a $(-1)^{\gamma-1}$ out of $p$ different rows, where $\gamma$ is varying from $1$ to $m$. So we have to pull out $(-1)^{p \left( \sum_{\gamma=1}^m \gamma-1 \right)} = (-1)^{m(m-1)p/2}$. This is the coefficient on $(-1)$ we are seeing in the constant for $C$. Finally by \autoref{lem:local-index-is-a1-local-degree-wronski} we have that the local degree of the Wronski and the index of $\sigma$ agree up to these Vandermonde constants.
\end{proof}

\begin{realitycheck} In \cite[Proposition 9]{SW}, the authors demonstrated a formula for the local index of an analogous section in the specific case where $m=2$ and $p=n-1$ for $n$ odd. For the section $\sigma = \oplus_{i=1}^{2n-2} \alpha_i \wedge \beta_i$, they expressed $\alpha_i = \sum_j \alpha_{i,j} \phi_j$ and $\beta_i = \sum_j b_{i,j} \phi_j$, and demonstrated that the local index at $W = e_n \wedge e_{n+1}$ is given by (both in their notation and in the notation from this paper):
\begin{align*}
    \ind_W \sigma = \gw{ \det \begin{vmatrix} \cdots & (a_{i,1} b_{i,n+1} - a_{i,n+1} b_{i,1}) & \cdots \\
        & \vdots & \\
        \cdots & (a_{i,j} b_{i,n+1} - a_{i,n+1} b_{i,j}) & \cdots\\
        & \vdots &  \\
        \cdots & (a_{i,n-1} b_{i,n+1} - a_{i,n+1} b_{i,n-1}) & \cdots \\
        \cdots & (a_{i,n} b_{i,1} - a_{i,1} b_{i,n}) & \cdots \\\
        & \vdots &  \\
       \cdots & (a_{i,n} b_{i,j} - a_{i,j} b_{i,n}) & \cdots\\
        & \vdots &  \\
        \cdots & (a_{i,n} b_{i,n-1} - a_{i,n-1} b_{i,n}) & \cdots
       \end{vmatrix} }
\end{align*}
We note that each of the entries in the $i$th column of this matrix $\mathcal{B}$ is obtained by taking the matrix $\begin{pmatrix} a_{i,n} &  b_{i,n}  \\  a_{i,n+1}& b_{i,n+1} \end{pmatrix}$, swapping out a row for something suitable (as in our construction above), and then taking a determinant. Rewriting this local index in the notation from this paper, we can see that it takes the following form:
\begin{align*}
    \ind_W \sigma &= 
       \gw{ \det \begin{vmatrix} \cdots & z_{\alpha(1,1)}(F_2(s_i)) & \cdots \\
        & \vdots & \\
        \cdots & z_{\alpha(1,j)}(F_2(s_i) & \cdots\\
        & \vdots &  \\
        \cdots & z_{\alpha(1,n-1)}(F_2(s_i)) & \cdots \\
        \cdots & (-1) z_{\alpha(2,1)}(F_2(s_i)) & \cdots \\\
        & \vdots &  \\
       \cdots & (-1) z_{\alpha(2,j)}(F_2(s_i)) & \cdots\\
        & \vdots &  \\
        \cdots & (-1) z_{\alpha(2,n-1)}(F_2(s_i)) & \cdots
       \end{vmatrix} }.
\end{align*}
\end{realitycheck}

\subsection{Maximally inflected curves}
Given an $m$-plane $W = \spn\left\{ f_1, \ldots, f_m \right\}$ with Wronskian $\Wr(f_1, \ldots, f_m)(t) = \prod_{i=1}^{mp}(t-s_i)$, we can consider it as a rational curve $\phi : \P^1 \to \P^{m-1}$, given by mapping $t\mapsto [f_1(t): \ldots : f_m(t)]$. The statement that the Wronski vanishes at $s_i$ is equivalent to the statement that the vectors $\phi(s), \phi'(s), \ldots, \phi^{m-1}(s)$ do not span $\P^{m-1}$ at time $t=s_i$ (c.f. \cite{Sottile,maximally-inflected}). Equivalently one says that the curve \textit{ramifies} or \textit{inflects} at time $s_i$. The degree of the Wronski then admits another interpretation: it counts how many rational curves of degree $\le m+p-1$ have \textit{prescribed inflection} at times $t=s_1, \ldots, s_{mp}$. With our refined local index in hand, we can ask the following question: \textit{How does the local degree} $\deg_W^{\A^1} \Wr$ \textit{of the Wronskian relate to the topology (or geometry) of the associated rational curve $\phi$?}

We don't claim any general answer to this question. Indeed studying topological constraints on inflected curves is a difficult problem in general. In the case when $m=3$ and $p=1,2,3$, we are looking at planar cubics, quartics, and quintics, respectively. Kharlamov and Sottile \cite{maximally-inflected} have studied real inflection data in this setting (by the Shapiro--Shapiro conjecture, when the inflection points are real, the rational curve will be real as well). We can present some very preliminary observations that tie our local degree to their work.

In the case of quartics, there are five different quartics with six flexes (this five is the complex degree of the Wronski whose domain is $\Gr_\C(2,5)$). The graphs of these, pulled from \cite{maximally-inflected}, are included below.\footnote{One remarks that the three leftmost curves have two flexes at the point of self-intersection. This is purely an accident, due to the symmetry on the projective line of the prescribed flex points. In general we shouldn't expect this to happen, and our computations are not impacted by this coincidence.} While the curves look topologically distinct due to the nodal singularities, it is perhaps more telling to look at the number of isolated points (real ordinary points with complex conjugate tangent directions).

\begin{figure}
    \begin{tabular}{ r  c c c c c}
    Curve & \includegraphics[width=0.12\linewidth]{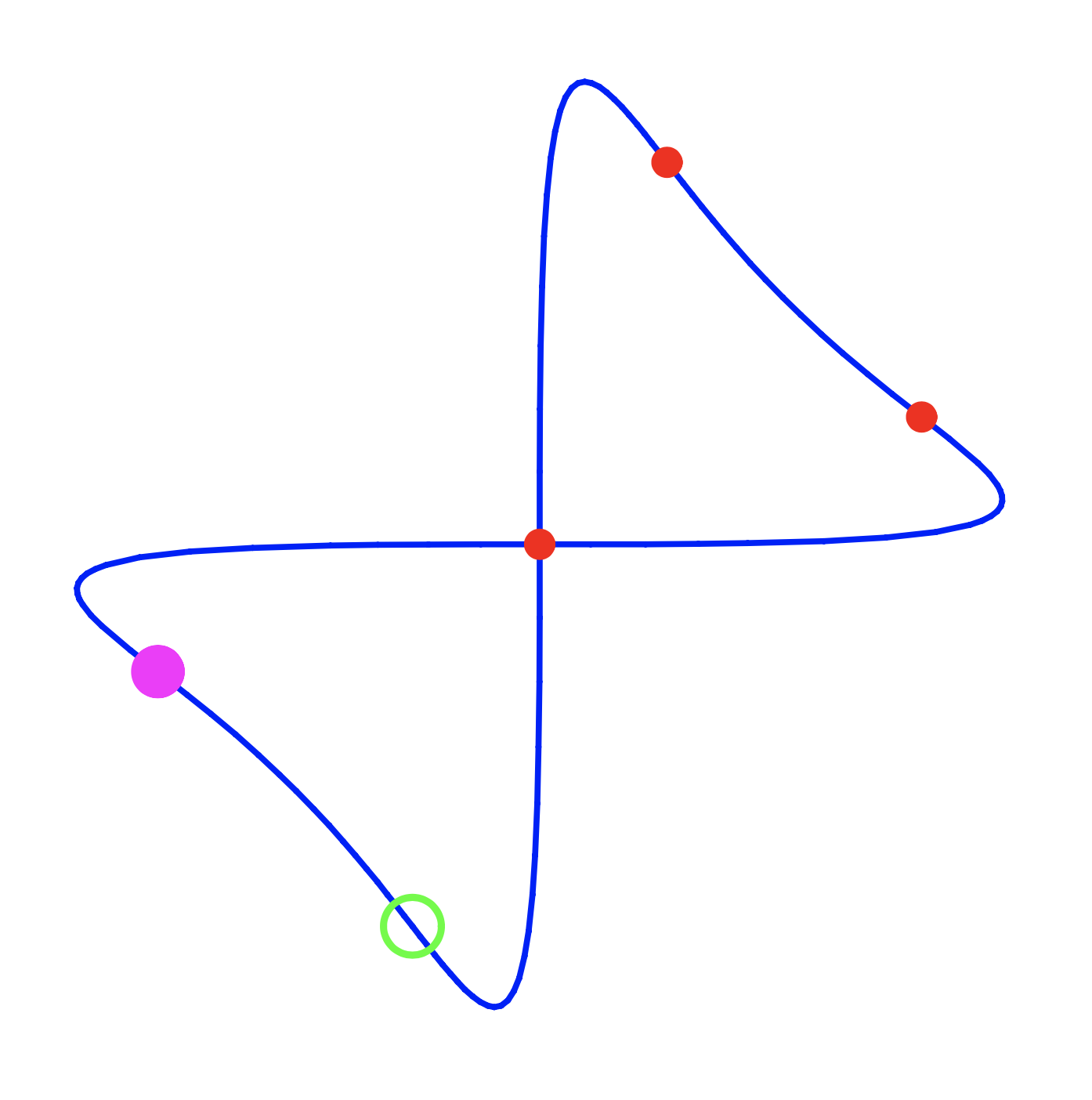} & \includegraphics[width=0.12\linewidth]{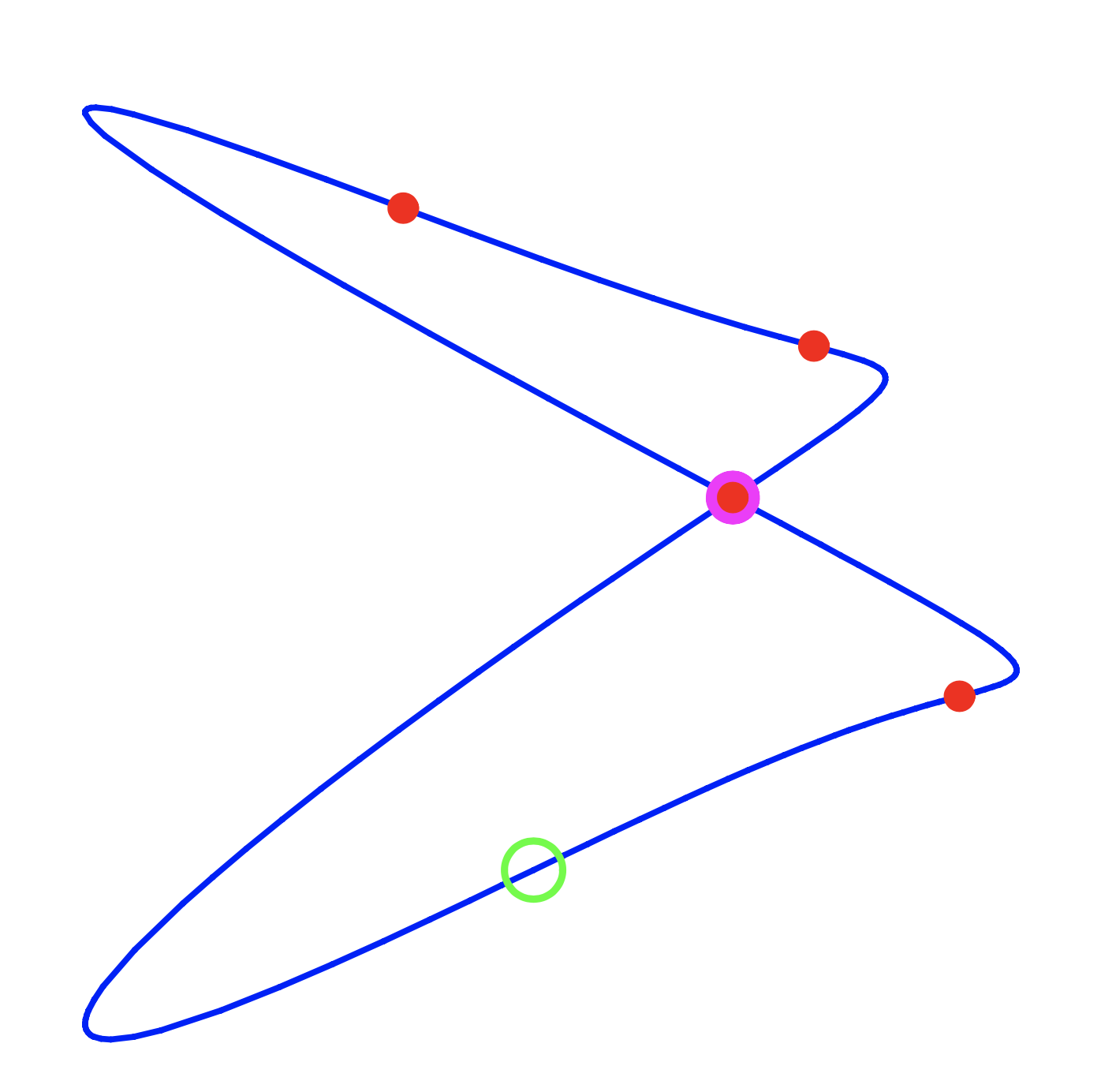} & \includegraphics[width=0.12\linewidth]{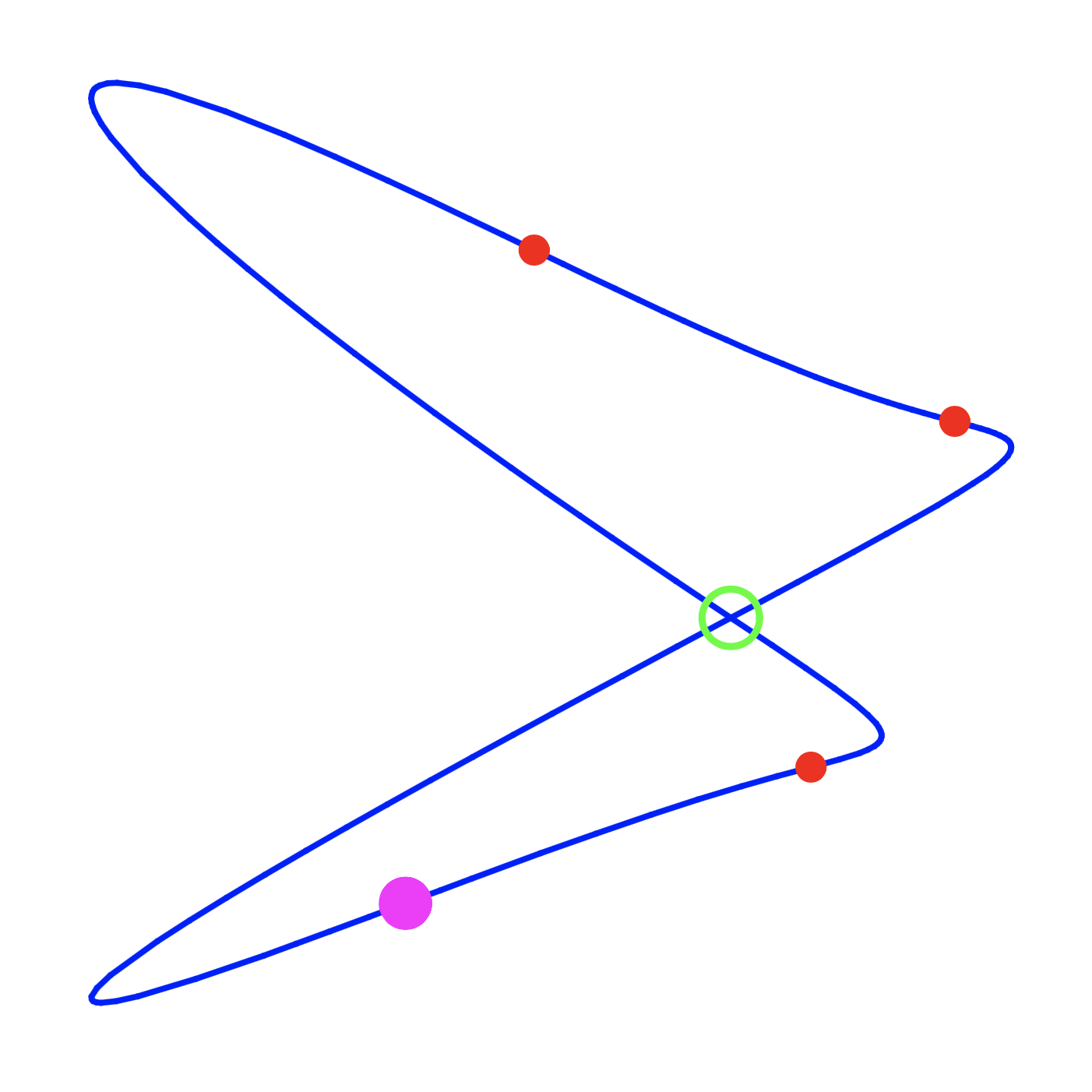} & \includegraphics[width=0.12\linewidth]{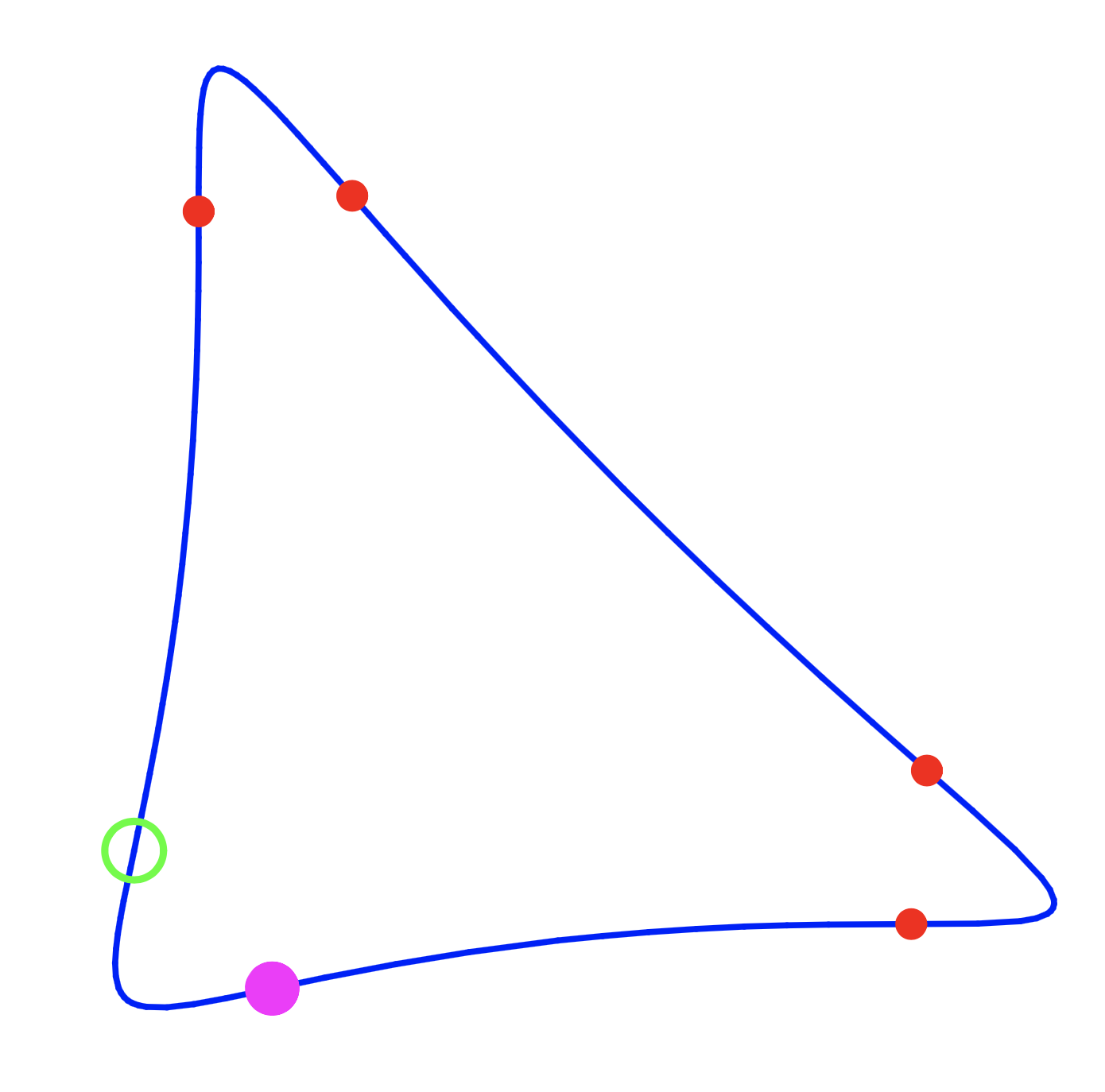} & \includegraphics[width=0.12\linewidth]{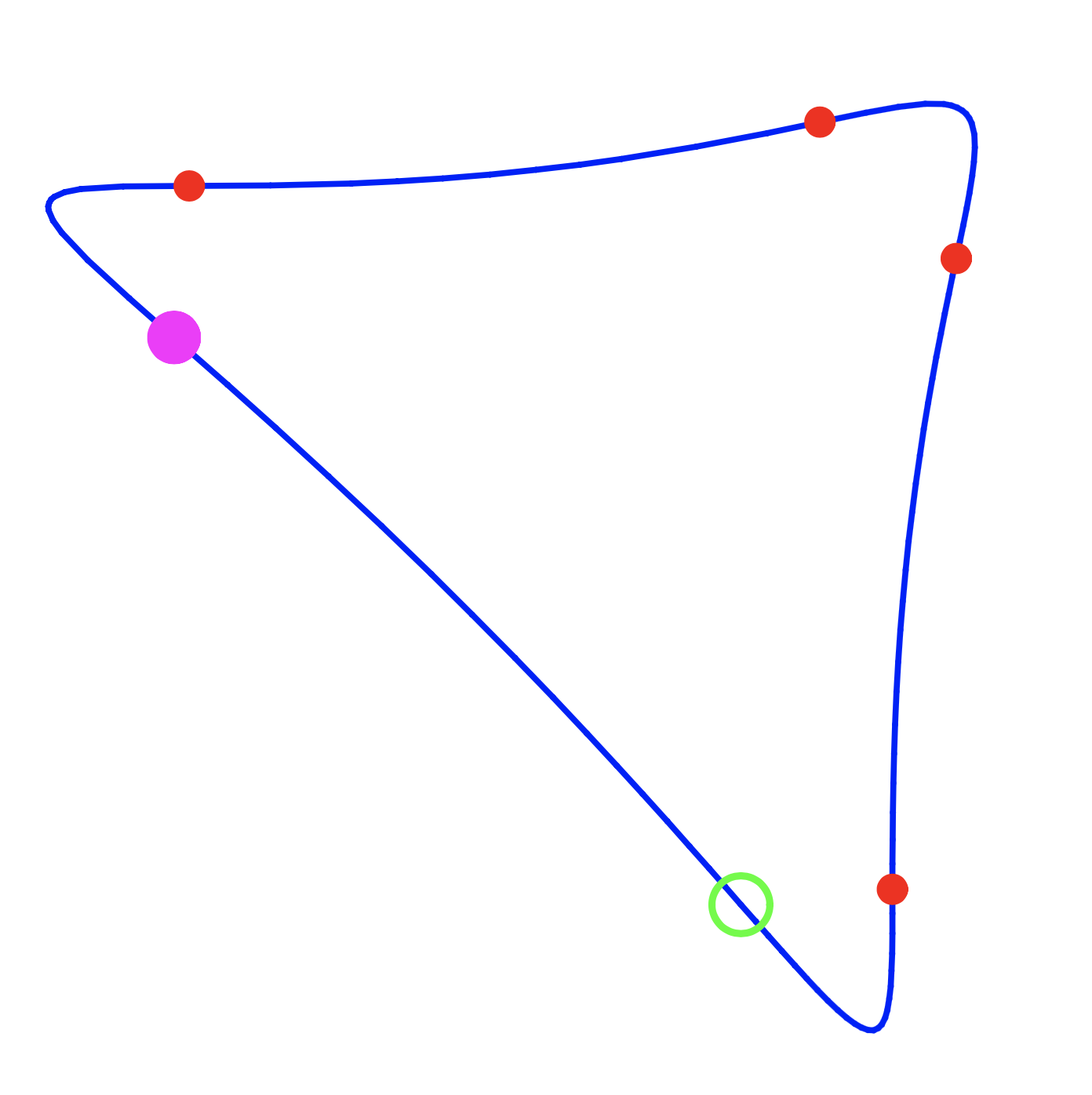} \\
    & &  \\
    \# isolated points & 2 & 2 & 2 & 3 & 3 \\
    Local $\A^1$-index over $\mathbb{R}$ & $+1$ & $+1$ & $+1$ & $-1$ & $-1$.
    \end{tabular}
    \centering
    \caption{Maximally inflected real quartics, \cite[p.~23]{maximally-inflected}}
\end{figure}
Welschinger \cite{Welschinger} remarked that $(-1)^{\#I}$ is a revealing invariant to consider for planar curves, where $I$ is the set of isolated points. Kass--Levine--Solomon--Wickelgren have extended this to define an arithmetic Welschinger invariant valued in $\GW(k)$ \cite{KLSW} (see also \cite{Levine-Welschinger} and \cite{Pauli-expos}). We may compute that Welschinger's original invariant agrees with the local index of $\sigma$ following the formula in \autoref{thm:formula-local-index}.
\begin{corollary}\label{cor:quartics} When $(f_1:f_2:f_3)$ defines a real planar quartic, we have that
\begin{align*}
    \sgn\deg_{\spn\left\{ f_1,f_2,f_3 \right\}}^{\A^1} \Wr = (-1)^{\# I},
\end{align*}
where $(-1)^{\#I}$ is Welschinger's invariant.
\end{corollary}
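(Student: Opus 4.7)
The plan is to reduce the claim to a direct calculation for each of the five maximally inflected real planar quartics in Figure 3. By \autoref{thm:formula-local-index}, for any simple preimage $W \in \Wr^{-1}\left(\prod_{i=1}^{6}(t-s_i)\right)$ we have $\deg_W^{\A^1}\Wr = \gw{C\cdot\det\mathcal{B}(W)} \in \GW(\R)$, where the global constant $C = V(s_1,\ldots,s_6)\cdot(1!\,2!)^{6}$ depends only on the choice of flex scalars. Since the signature map on $\GW(\R)$ sends $\gw{a}$ to $\sgn(a)$, we obtain $\sgn\deg_W^{\A^1}\Wr = \sgn(C)\cdot\sgn(\det\mathcal{B}(W))$, and $\sgn(C)$ is constant across the five solutions. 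Hence the entire corollary reduces to computing how $\sgn(\det\mathcal{B})$ varies with $W$.

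First I would fix six real flex scalars $s_1,\ldots,s_6$ for which all five complex solutions of $\Wr(W)(t)=\prod_i(t-s_i)$ are in fact real; such configurations are exhibited in \cite{maximally-inflected} and produce exactly the five curves in the figure. Next, for each solution $W_j$ I would pick explicit real polynomials $f_1^{(j)},f_2^{(j)},f_3^{(j)}$ spanning $W_j \subseteq \R_4[t]$, complete them to a basis of $\R_4[t]$, and compute the six distinguished Pl\"ucker coordinates $z_{\alpha(\gamma,k)}(F_3(s_\ell))$ as $3 \times 3$ Wronski determinants of basis polynomials evaluated at $s_\ell$, following \autoref{eqn:B-ell-definition}. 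Assembling the resulting $6 \times 6$ matrix $\mathcal{B}(W_j)$ and recording the sign of its determinant then yields $\sgn\deg_{W_j}^{\A^1}\Wr$ via the identity displayed above.

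Finally I would determine the number $\#I_j$ of real isolated points of the planar quartic parametrized by $t \mapsto [f_1^{(j)}(t):f_2^{(j)}(t):f_3^{(j)}(t)]$, either by inspection of the pictures in \cite{maximally-inflected} or by locating the real nodes whose two preimage parameters form a complex conjugate pair. Tabulating $\sgn\deg_{W_j}^{\A^1}\Wr$ against $(-1)^{\#I_j}$ for $j=1,\ldots,5$ then completes the verification.

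The principal obstacle is the explicit determination of the five rational quartics from the prescribed inflection data, which requires solving the Schubert-type system cutting out $\Wr^{-1}\left(\prod(t-s_i)\right)$, followed by bookkeeping of thirty-six Pl\"ucker coordinate entries per solution. A symmetric choice of flex scalars on $\P^1$ (as in \cite{maximally-inflected}) should substantially reduce the work by pairing solutions related by that symmetry, and one should note that while intermediate sign computations depend on basis choices, the ambient class $\gw{\det\mathcal{B}(W_j)} \in \GW(\R)$ is intrinsic, being the local $\A^1$-index itself.
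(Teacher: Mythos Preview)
Your proposal is correct and follows essentially the same route as the paper: the paper's argument is precisely a direct case-by-case computation of the local index via \autoref{thm:formula-local-index} for each of the five maximally inflected real quartics, with the outcome recorded in the table of Figure~3. Your outline spells out the bookkeeping (choice of basis for each $W_j$, assembly of the $6\times 6$ matrix $\mathcal{B}$, comparison with the isolated-point count) in more detail than the paper does, but the underlying strategy is identical.
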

It is possible that the arithmetic Welschinger invariant provides a local $\A^1$-degree for the Wronski, which could potentially shine light on the classification of maximally inflected curves in higher degrees and higher dimensions. We plan to explore this idea in greater detail in a future paper.

\printbibliography
\end{document}